\newtheorem{thm}{{\bf Theorem}}[section]
\newtheorem{cor}[thm]{{\bf Corollary}}
\newtheorem{lem}[thm]{{\bf Lemma}}
\newtheorem{prop}[thm]{{\bf Proposition}}
\theoremstyle{definition}
\theoremstyle{remark}
\numberwithin{equation}{section}
\begin{document}
\title[Somewhere dense orbit of  abelian subgroup of diffeomorphisms maps acting on $\mathbb{C}^{n}$]{Somewhere dense orbit of  abelian subgroup of diffeomorphisms maps acting on $\mathbb{C}^{n}$}

\author{Yahya N'dao and Adlene Ayadi}

 \address{Yahya N'dao, University of Moncton, Department of mathematics and statistics, Canada}
 \email{yahiandao@yahoo.fr }

\address{Adlene Ayadi, University of Gafsa, Faculty of sciences, Department of Mathematics,Gafsa, Tunisia.}
\email{adlenesoo@yahoo.com; \ \ \ \ \ \ Web page:
www.linearaction.blogspot.com}

\thanks{This work is supported by the research unit: syst\`emes dynamiques et combinatoire: 99UR15-15}

\subjclass[2000]{37C85, 47A16, 17B45}

\keywords{ diffeomorphisms commute, action group, abelian group,
somewhere dense, locally dense orbit\dots}

\maketitle

\begin{abstract}

In this paper, we give a characterization for any abelian subgroup
$G$ of a lie group of diffeomorphisms maps of $\mathbb{C}^{n}$,
having a somewhere dense orbit $G(x)$, $x\in \mathbb{C}^{n}$:
$G(x)$ is somewhere dense in $\mathbb{C}^{n}$ if and only if there
are $f_{1},\dots,f_{2n+1}\in exp^{-1}(G)$ such that $f_{2n+1}\in
vect(f_{1},\dots,f_{2n})$ and
$\mathbb{Z}f_{1}(x)+\dots+\mathbb{Z}f_{2n+1}(x)$ is dense in
$\mathbb{C}^{n}$, where  $vect(f_{1},\dots,f_{2n})$ is the vector
space over $\mathbb{R}$ generated by $f_{1},\dots,f_{2n}$.
\end{abstract}

\section{{\bf Introduction} }

Denote by $Diff^{r}(\mathbb{C}^{n})$, $r\geq 1$ the group of all
$C^{r}$-diffemorphisms of $\mathbb{C}^{n}$. Let $\Gamma$ be a lie
subgroup of $Diff^{r}(\mathbb{C}^{n})$, $r\geq 1$ and $G$ be an
abelian subgroup of $\Gamma$, such that $Fix(G)\neq \emptyset$,
where $Fix(G)=\{x\in \mathbb{C}^{n}:\ f(x)=x,\ \forall f\in G\}$
be the global fixed point set of $G$. There is a natural action
$G\times \mathbb{C}^{n} \longrightarrow \mathbb{C}^{n}$. $(f, x)
\longmapsto f(x)$. For a point $x\in\mathbb{C}^{n}$, denote by
$G(x) =\{f(x), \ f\in G\}\subset \mathbb{C}^{n}$ the orbit of $G$
through $x$. A subset $E\subset\mathbb{C}^{n}$  is called
$G$-invariant if $f(E)\subset E$ for any $f\in G$; that is $E$ is
a union of orbits. Denote by $\overline{E}$ (resp.
$\overset{\circ}{E}$ ) the closure (resp. interior) of
$E$.\medskip

 Recall that
$E\subset \mathbb{C}^{n}$ is somewhere dense in $\mathbb{C}^{n}$
if the closure $\overline{E}$
 has nonempty interior in $\mathbb{C}^{n}$.   An orbit $\gamma$ is called somewhere dense  (or locally
dense) if \  $\overset{\circ}{\overline{\gamma}}\neq\emptyset$.
The group $G$ is called hypercyclic if it has a dense orbit in
$\mathbb{C}^{n}$. Hypercyclic is also called topologically
transitive.

 \medskip

 The purpose of this paper is to give a characterization for any subgroup $G$ of
a lie group of diffeomorphisms maps of $\mathbb{C}^{n}$, having a
dense orbit.  In \cite{aAhM05}, the authors present a global
dynamic of every abelian subgroup of $GL(n, \mathbb{C})$  and in
\cite{aAh-M05}, they  characterize hypercyclic abelian subgroup of
$GL(n, \mathbb{C})$. Our main result is viewed as a continuation
 of \cite{YNAA-C} and \cite{YNAA-D}.\
\\
Denote by:
\\
- $\mathbb{C}^{*}=\mathbb{C}\backslash\{0\}$ and
$\mathbb{N}^{*}=\mathbb{N}\backslash\{0\}$.\
\
\\
- $C^{r}(\mathbb{C}^{n}, \mathbb{C}^{n})$ the set of all
$C^{r}$-differentiable maps of $\mathbb{C}^{n}$. \ \\  - For a
subset $E\subset \mathbb{C}^{n}$ (resp. $E\subset
C^{r}(\mathbb{C}^{n}, \mathbb{C}^{n})$), denote by $vect(E)$ the
vector subspace of $\mathbb{C}^{n}$ (resp. $C^{r}(\mathbb{C}^{n},
\mathbb{C}^{n})$) over $\mathbb{R}$ generated by all elements of
$E$. \ \\ - $exp:\ C^{r}(\mathbb{C}^{n},
\mathbb{C}^{n})\longrightarrow Diff^{r}(\mathbb{C}^{n})$ the
exponential map defined by $exp(f)=e^{f}$, $f\in
C^{r}(\mathbb{C}^{n}, \mathbb{C}^{n})$.\ \\
\\
- $H$  the lie algebra associated to $\Gamma$. \ \\ -
$exp:H\longrightarrow \Gamma$ be the exponential map.\
\\
- $H_{x}=\{f(x),\ \ B\in H\}$,  it is a vector subspace of
$\mathbb{C}^{n}$ over $\mathbb{R}$.\ \\ -
$\mathrm{g}=exp^{-1}(G)$, it is an additive group  because $G$ is
abelian. \
\\
- $\mathrm{g}_{x}=\{f(x),\ \ B\in\mathrm{g}\}$,  it is an additive
subgroup of $\mathbb{C}^{n}$ because $\mathrm{g}$ is an additive
group.

\medskip

Our principal results can  be stated as follows:

\begin{thm}\label{T:1} Let  $\Gamma$  be an abelian lie  subgroup of $Diff^{r}(\mathbb{C}^{n})$ and $x\in \mathbb{C}^{n}\backslash\{0\}$.
Then the following assertions are equivalent:\ \\ (i)\
$H_{x}=\mathbb{C}^{n}$. \ \\ (ii)
$\overset{\circ}{\overline{\Gamma(x)}}\neq\emptyset$.
\end{thm}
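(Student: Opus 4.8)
The plan is to prove the two implications separately: $(i)\Rightarrow(ii)$ is soft, while $(ii)\Rightarrow(i)$ carries the content. We take $\Gamma$ connected, so that $\Gamma=exp(H)$, where $H$ is a finite-dimensional real vector space of commuting complete $C^{r}$ vector fields; each $f\in H$ induces the fundamental vector field $f$ of the action, with value $f(x)$ at a point $x$, so that the orbit map $\pi_{x}\colon\Gamma\to\mathbb{C}^{n}$, $g\mapsto g(x)$, satisfies $\frac{d}{dt}\big|_{t=0}exp(tf)(x)=f(x)$.

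For $(i)\Rightarrow(ii)$: from the displayed identity, $d(\pi_{x})_{e}(f)=f(x)$, so the image of $d(\pi_{x})_{e}$ is exactly $H_{x}$; using right translations, the image of $d(\pi_{x})_{g}$ is $H_{g(x)}=dg_{x}(H_{x})$. Hence if $H_{x}=\mathbb{C}^{n}$ then $d(\pi_{x})_{g}$ is surjective for every $g$, so $\pi_{x}$ is a submersion, hence an open map; thus $\Gamma(x)=\pi_{x}(\Gamma)$ is a nonempty open subset of $\mathbb{C}^{n}$, and a fortiori $\overset{\circ}{\overline{\Gamma(x)}}\neq\emptyset$.

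For $(ii)\Rightarrow(i)$ I would argue by contraposition: assume $d:=\dim_{\mathbb{R}}H_{x}<2n$ and show $\overset{\circ}{\overline{\Gamma(x)}}=\emptyset$. If not, put $V:=\overset{\circ}{\overline{\Gamma(x)}}$, a nonempty open $\Gamma$-invariant set in which $\Gamma(x)$ is dense; replacing $x$ by a point of $\Gamma(x)\cap V$ (which changes $H_{x}$ only by the invertible map $dg_{x}$, since commutativity gives $dg_{x}(f(x))=f(g(x))$, and so does not change $d$) I may assume $x\in V$. Let $\mathcal{D}$ be the distribution on $V$ spanned by the fundamental vector fields; it is involutive and $\Gamma$-invariant, with $\operatorname{rank}\mathcal{D}_{y}=\dim_{\mathbb{R}}H_{y}=d$ at every point of the orbit. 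Since $\operatorname{rank}\mathcal{D}$ is lower semicontinuous and $\Gamma$-invariant, the open invariant set $\{\operatorname{rank}\mathcal{D}\geq d+1\}$ must miss the dense orbit, hence is empty; so $\mathcal{D}$ has rank exactly $d$ on the open dense invariant set $V':=\{\operatorname{rank}\mathcal{D}=d\}\supseteq\Gamma(x)$, and by Frobenius $V'$ is foliated by $d$-dimensional leaves, with $\Gamma(x)$ contained in a single leaf that is dense in $V'$.

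The hard part is to rule out such a dense leaf of dimension $d<2n$ in the orbit foliation of a connected abelian Lie group acting on a domain of $\mathbb{C}^{n}$. I would do this through the transverse holonomy of the leaf $\Gamma(x)$: on a local transversal $T\cong\mathbb{R}^{2n-d}$ at $x$, the trace of the orbit is the orbit of $x$ under the holonomy pseudogroup, which is abelian because it is generated by first-return maps of the commuting flows $exp(tf)$. Using that these return maps are diffeomorphisms of a domain of $\mathbb{R}^{2n-d}$ coming from an abelian action --- together with the structure theory of abelian subgroups developed here and in \cite{YNAA-C,YNAA-D} --- one should reduce the density of this orbit in $T$ to the density in $\mathbb{R}^{2n-d}$ of a finitely generated subgroup; a Kronecker-type argument then forces the missing transverse infinitesimal directions to be already present, i.e.\ $\operatorname{rank}\mathcal{D}_{x}=2n$, contradicting $d<2n$. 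Carrying this out --- controlling the holonomy and the resonance conditions among the generators --- is the core of the proof, and it is exactly the mechanism behind the abstract's characterization, where the $2n+1$ generators and the constraint $f_{2n+1}\in vect(f_{1},\dots,f_{2n})$ encode the Kronecker count together with the requirement $H_{x}=\mathbb{C}^{n}$.
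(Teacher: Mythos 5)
Your first implication is fine and agrees in substance with the paper: the orbit map has differential with image $H_{g(x)}$ at $g$, so $H_{x}=\mathbb{C}^{n}$ makes $\pi_{x}$ a submersion and $\Gamma(x)$ open. The problem is the converse, which is where all the content of the theorem lies, and there your text is a research plan rather than a proof. You reduce to ruling out a dense leaf of dimension $d<2n$ in the orbit foliation, and then write that one ``should'' control the holonomy pseudogroup, reduce to a Kronecker-type density statement, and derive a contradiction --- explicitly deferring this as ``the core of the proof.'' That core step is exactly what is missing; nothing in the sketch explains why a dense leaf of deficient dimension is impossible (dense lower-dimensional leaves of foliations of open sets do exist in general, so some specific feature of the situation must be used, and you do not identify it). The paper's own argument for $(ii)\Rightarrow(i)$ is entirely different and local: it pulls the open set $\overset{\circ}{\overline{\Gamma(x)}}\cap W$ back through the evaluation map $\Phi_{x}$ and the local diffeomorphism $exp\colon U\to V$, uses the identity $\Phi_{x}^{-1}(\widetilde{G}(x)\cap W)\cap V=\widetilde{G}\cap V$ (Lemma~\ref{L:2}) to trade the orbit for the group, and lands in the closed linear subspace $\widetilde{\mathrm{g}}_{x}$, which must therefore contain a nonempty open set and hence equal $\mathbb{C}^{n}$. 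No foliation or holonomy analysis is needed.

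A second, independent defect: your opening reduction ``we take $\Gamma$ connected, so that $\Gamma=exp(H)$'' is not legitimate here. For a disconnected group the orbit is a countable union of translates of identity-component orbits, and the closure of such a union can have nonempty interior even when each translate is nowhere dense (this is precisely the situation for the finitely generated abelian groups that motivate the paper, e.g.\ a rank-two multiplicative subgroup of $\mathbb{C}^{*}$ acting on $\mathbb{C}$ with a dense orbit while the identity component is trivial). The paper copes with this by passing to the closed group $\widetilde{G}=\overline{G}\cap Diff^{r}(\mathbb{C}^{n})$ and its Lie algebra $\widetilde{\mathrm{g}}$, which is how the extra components get absorbed into the infinitesimal data; your argument never performs this closure and so, even if the foliation step were completed, it would only prove the statement for connected $\Gamma$, which is not the case of interest.
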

\medskip

In general, the Lie algebra $\widetilde{\mathrm{g}}$ is not
explicitly defined, so we give an explicitly test to the existence
of somewhere dense orbit by the following theorem:

\begin{thm}\label{T:3} Let  $G$  be an abelian subgroup of a lie  group $\Gamma\subset Diff^{r}(\mathbb{C}^{n})$  and $x\in \mathbb{C}^{n}\backslash\{0\}$.
Then $\overset{\circ}{\overline{G(x)}}\neq\emptyset$ if and only
if there exist $f_{1},\dots, f_{2n+1}\in exp^{-1}(\widetilde{G})$
such that $f_{2n+1}\in vect(f_{1},\dots, f_{2n})$ and
$\mathbb{Z}f_{1}(x)+ \dots+ \mathbb{Z}f_{2n+1}(x)$ is a dense
additive subgroup of $\mathbb{C}^{n}$.
\end{thm}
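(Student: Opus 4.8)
The plan is to derive Theorem \ref{T:3} from Theorem \ref{T:1} by passing to the closure $\widetilde{G}$ of $G$, and then to rephrase the resulting algebraic condition $\widetilde{\mathrm{g}}_{x}=\mathbb{C}^{n}$ in terms of a finitely generated dense subgroup, using linear algebra together with Kronecker's density theorem. First I would recall that $\widetilde{G}$, the closure of $G$, is a closed abelian subgroup of $\mathrm{Diff}^{r}(\mathbb{C}^{n})$, hence an abelian Lie subgroup whose Lie algebra is $\widetilde{\mathrm{g}}=\exp^{-1}(\widetilde{G})$, a real vector space on which the evaluation map $f\longmapsto f(x)$ is $\mathbb{R}$-linear with image $\widetilde{\mathrm{g}}_{x}$. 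Since $g\longmapsto g(x)$ is continuous on $\mathrm{Diff}^{r}(\mathbb{C}^{n})$, every $g\in\widetilde{G}$ satisfies $g(x)\in\overline{G(x)}$, so $\widetilde{G}(x)\subseteq\overline{G(x)}$ and therefore $\overline{G(x)}=\overline{\widetilde{G}(x)}$; in particular $\overset{\circ}{\overline{G(x)}}\neq\emptyset$ if and only if $\overset{\circ}{\overline{\widetilde{G}(x)}}\neq\emptyset$. Applying Theorem \ref{T:1} to the abelian Lie subgroup $\widetilde{G}$ (with $\widetilde{\mathrm{g}}_{x}$ in the role of $H_{x}$) then reduces the statement to the purely algebraic equivalence: $\widetilde{\mathrm{g}}_{x}=\mathbb{C}^{n}$ if and only if there exist $f_{1},\dots,f_{2n+1}\in\exp^{-1}(\widetilde{G})$ with $f_{2n+1}\in vect(f_{1},\dots,f_{2n})$ and $\mathbb{Z}f_{1}(x)+\dots+\mathbb{Z}f_{2n+1}(x)$ dense in $\mathbb{C}^{n}$.

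For the implication from the density condition to $\widetilde{\mathrm{g}}_{x}=\mathbb{C}^{n}$, I would simply note that each $f_{i}$ lies in $\widetilde{\mathrm{g}}$, so each $f_{i}(x)$ lies in the $\mathbb{R}$-subspace $\widetilde{\mathrm{g}}_{x}$ of $\mathbb{C}^{n}$, which is closed; hence $\mathbb{C}^{n}=\overline{\mathbb{Z}f_{1}(x)+\dots+\mathbb{Z}f_{2n+1}(x)}\subseteq\widetilde{\mathrm{g}}_{x}$, and so $\widetilde{\mathrm{g}}_{x}=\mathbb{C}^{n}$. For the converse, since the evaluation map $\widetilde{\mathrm{g}}\to\mathbb{C}^{n}$ is $\mathbb{R}$-linear and onto and $\dim_{\mathbb{R}}\mathbb{C}^{n}=2n$, I would choose $f_{1},\dots,f_{2n}\in\widetilde{\mathrm{g}}$ such that $f_{1}(x),\dots,f_{2n}(x)$ form an $\mathbb{R}$-basis of $\mathbb{C}^{n}$, then pick real numbers $\alpha_{1},\dots,\alpha_{2n}$ for which $1,\alpha_{1},\dots,\alpha_{2n}$ are linearly independent over $\mathbb{Q}$, and set $f_{2n+1}=\alpha_{1}f_{1}+\dots+\alpha_{2n}f_{2n}\in vect(f_{1},\dots,f_{2n})\subseteq\widetilde{\mathrm{g}}$. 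By linearity, $f_{2n+1}(x)=\alpha_{1}f_{1}(x)+\dots+\alpha_{2n}f_{2n}(x)$, so in the coordinates given by the basis $f_{1}(x),\dots,f_{2n}(x)$ the group $\mathbb{Z}f_{1}(x)+\dots+\mathbb{Z}f_{2n+1}(x)$ becomes $\mathbb{Z}^{2n}+\mathbb{Z}(\alpha_{1},\dots,\alpha_{2n})$; by Kronecker's density theorem this is dense in $\mathbb{R}^{2n}\cong\mathbb{C}^{n}$ precisely because of the $\mathbb{Q}$-independence, which yields the required $f_{1},\dots,f_{2n+1}$.

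The essential ingredient is Theorem \ref{T:1}, which is available; the delicate point of the argument is the reduction in the first paragraph, where one must justify that $\widetilde{G}$ is genuinely an abelian Lie subgroup to which Theorem \ref{T:1} applies, that the orbit closures of $G$ and $\widetilde{G}$ through $x$ coincide in the relevant topology on $\mathrm{Diff}^{r}(\mathbb{C}^{n})$, and that $\exp^{-1}(\widetilde{G})$ really coincides with the vector-space Lie algebra $\widetilde{\mathrm{g}}$ so that evaluation is $\mathbb{R}$-linear there. Once this is in place, the two directions established in the second paragraph are, respectively, immediate and a routine application of linear algebra and Kronecker's theorem; the hypothesis $x\neq 0$ enters only through Theorem \ref{T:1}.
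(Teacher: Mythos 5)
Your reduction to $\widetilde{G}$ and your use of Theorem \ref{T:1} follow the paper's outline, and your ``only if'' direction (choosing $f_{1},\dots,f_{2n}\in\widetilde{\mathrm{g}}$ whose values at $x$ form a real basis and setting $f_{2n+1}=\sum\alpha_{k}f_{k}$ with $1,\alpha_{1},\dots,\alpha_{2n}$ rationally independent, then invoking Kronecker) is correct and is in fact a direction the paper's written proof never addresses. But the ``if'' direction has a genuine gap at exactly the point you flag and then leave unjustified: you identify $exp^{-1}(\widetilde{G})$ with the Lie algebra $\widetilde{\mathrm{g}}$ so that each $f_{i}(x)$ lies in the closed subspace $\widetilde{\mathrm{g}}_{x}$. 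This identification is false in general. The Lie algebra consists of those vector fields $\xi$ with $e^{t\xi}\in\widetilde{G}$ for \emph{all} $t$, whereas $exp^{-1}(\widetilde{G})$ only requires $e^{f}\in\widetilde{G}$; it therefore contains, for instance, any generator of a periodic flow with $e^{f}=\mathrm{id}$, which need not belong to $\widetilde{\mathrm{g}}$. So from $f_{i}\in exp^{-1}(\widetilde{G})$ alone you cannot conclude $f_{i}(x)\in\widetilde{\mathrm{g}}_{x}$, and your one-line deduction $\mathbb{C}^{n}=\overline{\sum\mathbb{Z}f_{i}(x)}\subseteq\widetilde{\mathrm{g}}_{x}$ collapses.

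The paper closes this gap by a different mechanism, and this is the essential content of its proof: using Lemmas \ref{L:BBB017}, \ref{L:9} and \ref{L:018} it shows that the density of $\mathbb{Z}f_{1}(x)+\dots+\mathbb{Z}f_{2n+1}(x)$ in $\mathbb{C}^{n}$ forces $\mathbb{Z}f_{1}+\dots+\mathbb{Z}f_{2n+1}$ to be dense in the $2n$-dimensional space $E=vect(f_{1},\dots,f_{2n})$. Hence any small multiple $t_{k}f_{k}$ with $\|t_{k}f_{k}\|<r_{\widetilde{G}}$ is a limit of elements of $exp^{-1}(\widetilde{G})$, so $e^{t_{k}f_{k}}\in\widetilde{G}$ by closedness of $\widetilde{G}$ and continuity of $exp$; then the local injectivity of $exp$ together with $exp(U\cap\widetilde{\mathrm{g}})=exp(U)\cap\widetilde{G}$ (Lemma \ref{L:001000+}, used in Theorem \ref{T:2}) finally yields $t_{k}f_{k}\in\widetilde{\mathrm{g}}$, and only then does $\widetilde{\mathrm{g}}_{x}=\mathbb{C}^{n}$ follow. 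In other words, the density hypothesis is not merely an afterthought to be fed into a closed-subspace argument; it is what allows one to pass from $exp^{-1}(\widetilde{G})$ into $\widetilde{\mathrm{g}}$ in the first place. To repair your proof you would need to reproduce this step (or an equivalent one) rather than assert $\widetilde{\mathrm{g}}=exp^{-1}(\widetilde{G})$.
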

\medskip

Let's introduce the arithmetic property: We say that
$f_{1},\dots,f_{2n+1}\in C^{r}(\mathbb{C}^{n}, \mathbb{C}^{n})$
satisfy \emph{property $\mathcal{D}(x)$} for some $x\in
\mathbb{C}^{n}$ if $f_{1},\dots, f_{2n}$ are linearly independent,
$f_{2n+1}\in vect(f_{1},\dots, f_{2n})$ and for every
$(s_{1},\dots,s_{2n+1})\in \mathbb{Z}^{2n+1}\backslash\{0\}$: \
$$\mathrm{rank}\left[\begin{array}{cccccc}
             \mathrm{Re}(f_{1}(x)) & \dots & \mathrm{Re}(f_{2n+1}(x)) \\
             \mathrm{Im}(f_{1}(x)) & \dots & \mathrm{Im}(f_{2n+1}(x)) \\
             s_{1} & \dots& s_{2n+1}
           \end{array}
\right] = 2n+1.$$
\medskip

For a vector $v\in\mathbb{C}^{n}$, we write $v = \mathrm{Re}(v)+
i\mathrm{Im}(v)$ where $\mathrm{Re}(v)$ and $\mathrm{Im}(v)\in
\mathbb{R}^{n}$.
\medskip

As an immediate consequence of Theorem~\ref{T:3}, we have:
\begin{cor}\label{C:5} Let  $G$  be an abelian subgroup of a lie  group $\Gamma\subset Diff^{r}(\mathbb{C}^{n})$  and $x\in \mathbb{C}^{n}\backslash\{0\}$. Then $\overset{\circ}{\overline{G(x)}}\neq\emptyset$
if and only if there exist $f_{1},\dots, f_{2n+1}\in exp^{-1}(G)$
and satisfying property $\mathcal{D}(x)$.
\end{cor}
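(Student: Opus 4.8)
The plan is to deduce the corollary from Theorem~\ref{T:3} by repackaging its density clause as the rank clause of property $\mathcal{D}(x)$. The one genuinely new ingredient is a Kronecker-type density criterion, which I would isolate as a lemma: for $v_{1},\dots,v_{2n+1}\in\mathbb{C}^{n}$, viewed as vectors of $\mathbb{R}^{2n}$ via $v=\mathrm{Re}(v)+i\,\mathrm{Im}(v)$, the additive group $\mathbb{Z}v_{1}+\dots+\mathbb{Z}v_{2n+1}$ is dense in $\mathbb{C}^{n}$ if and only if, for every $(s_{1},\dots,s_{2n+1})\in\mathbb{Z}^{2n+1}\setminus\{0\}$, the $(2n+1)\times(2n+1)$ matrix occurring in the definition of property $\mathcal{D}(x)$, with each $f_{j}(x)$ replaced by $v_{j}$, has rank $2n+1$.

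To prove this lemma I would argue by duality. Put $H=\mathbb{Z}v_{1}+\dots+\mathbb{Z}v_{2n+1}$; its closure is a closed subgroup of $\mathbb{R}^{2n}$, so $\overline{H}=\mathbb{R}^{2n}$ iff no nontrivial continuous character of $\mathbb{R}^{2n}$ kills $H$, i.e. (writing a character as $y\mapsto e^{2\pi i\langle t,y\rangle}$) iff the only $t\in\mathbb{R}^{2n}$ with $\langle t,v_{j}\rangle\in\mathbb{Z}$ for all $j$ is $t=0$. It then remains linear algebra to match this with the rank statement: a nonzero $t$ with all $\langle t,v_{j}\rangle=s_{j}\in\mathbb{Z}$ is precisely a nonzero integer row $(s_{1},\dots,s_{2n+1})$ lying in the row space of the first $2n$ rows of the matrix, equivalently that matrix dropping to rank $2n$; and conversely the rank condition for even one nonzero $s$ already forces $v_{1},\dots,v_{2n+1}$ to span $\mathbb{R}^{2n}$, so that $\langle t,v_{j}\rangle=0$ for all $j$ gives $t=0$. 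Two elementary by-products are recorded for later use: the rank condition implies that $v_{1},\dots,v_{2n+1}$ span $\mathbb{R}^{2n}$; and if in addition $v_{2n+1}\in vect(v_{1},\dots,v_{2n})$ then $v_{1},\dots,v_{2n}$ already span $\mathbb{R}^{2n}$ and hence form a basis.

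Granting the lemma, the corollary drops out of Theorem~\ref{T:3}. For $(\Leftarrow)$: if $f_{1},\dots,f_{2n+1}\in exp^{-1}(G)$ satisfy property $\mathcal{D}(x)$, then applying the lemma with $v_{j}=f_{j}(x)$ turns the rank clause into density of $\mathbb{Z}f_{1}(x)+\dots+\mathbb{Z}f_{2n+1}(x)$ in $\mathbb{C}^{n}$; together with $f_{2n+1}\in vect(f_{1},\dots,f_{2n})$ and $exp^{-1}(G)\subset exp^{-1}(\widetilde{G})$, Theorem~\ref{T:3} gives $\overset{\circ}{\overline{G(x)}}\neq\emptyset$. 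For $(\Rightarrow)$: Theorem~\ref{T:3} yields $f_{1},\dots,f_{2n+1}\in exp^{-1}(\widetilde{G})$ with $f_{2n+1}\in vect(f_{1},\dots,f_{2n})$ and $\mathbb{Z}f_{1}(x)+\dots+\mathbb{Z}f_{2n+1}(x)$ dense; by the lemma the rank clause holds. Since $f_{2n+1}=\sum_{i=1}^{2n}c_{i}f_{i}$ as maps, we get $f_{2n+1}(x)=\sum_{i=1}^{2n}c_{i}f_{i}(x)$, so by the second by-product $f_{1}(x),\dots,f_{2n}(x)$ is a basis of $\mathbb{R}^{2n}$; then any linear relation among $f_{1},\dots,f_{2n}$, evaluated at $x$, is trivial, so $f_{1},\dots,f_{2n}$ are linearly independent as maps, and property $\mathcal{D}(x)$ holds.

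I expect the main obstacle to be the Kronecker-type lemma: the duality argument for closed subgroups of $\mathbb{R}^{2n}$ and, above all, the bookkeeping that turns ``no nontrivial $t$ with $\langle t,v_{j}\rangle\in\mathbb{Z}$'' into the determinantal rank condition of property $\mathcal{D}(x)$. A minor secondary point is reconciling $exp^{-1}(\widetilde{G})$ in Theorem~\ref{T:3} with $exp^{-1}(G)$ in the corollary, which relies on the form of Theorem~\ref{T:3} guaranteeing that the generators can be taken inside $exp^{-1}(G)$, as stated in the abstract.
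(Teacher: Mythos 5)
Your proposal is correct and follows essentially the same route as the paper: the paper's proof of the corollary is a one-line deduction from Theorem~\ref{T:3} together with Proposition~\ref{p:012+}, which is exactly the Kronecker-type rank criterion you isolate as a lemma (the paper simply cites it from Waldschmidt rather than reproving it by duality). Your additional bookkeeping --- deriving the linear independence of $f_{1},\dots,f_{2n}$ from the rank condition, and flagging the $exp^{-1}(\widetilde{G})$ versus $exp^{-1}(G)$ mismatch between Theorem~\ref{T:3} and the corollary --- is more care than the paper itself takes at this step.
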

\medskip

As an important consequence of the Theorem~\ref{T:3}, we give the
following Corollary which simplifies the test given by Theorem 1.3
proved in \cite{aAh-M05} for the abelian subgroup of
$GL(n,\mathbb{C})$:
\medskip

\begin{cor}\label{C:6} Let  $G$  be an abelian subgroup of  $GL(n,\mathbb{C})$  and $x\in
\mathbb{C}^{n}\backslash\{0\}$. Then
$\overline{G(x)}=\mathbb{C}^{n}$ if and only if there exist
$B_{1},\dots, B_{2n+1}\in exp^{-1}(G)$
  such that $\mathbb{Z}B_{1}x+\dots +\mathbb{Z}B_{2n+1}x$  is dense in
  $\mathbb{C}^{n}$.
\end{cor}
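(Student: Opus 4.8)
The plan is to derive this from Theorem~\ref{T:3} by exploiting the very special structure of the linear case $\Gamma = GL(n,\mathbb{C})$. First I would observe that when $G \subset GL(n,\mathbb{C})$, the group $\Gamma$ itself can be taken to be the Zariski (or Lie) closure of $G$ inside $GL(n,\mathbb{C})$, which is again a linear algebraic/Lie group, so $\widetilde{G}$ is a connected abelian Lie subgroup of $GL(n,\mathbb{C})$ and $exp^{-1}(\widetilde{G})$ consists of matrices in $M_n(\mathbb{C}) = C^r$-maps that are linear. The key simplification is that for a \emph{linear} map $B$, the evaluation $B(x)=Bx$ depends linearly on $B$, and more importantly, for $GL(n,\mathbb{C})$ acting linearly, $\overline{G(x)}$ is a cone-like set: one shows (this is where one invokes the structure theory from \cite{aAhM05}, \cite{aAh-M05}) that $\overset{\circ}{\overline{G(x)}}\neq\emptyset$ is equivalent to $\overline{G(x)}=\mathbb{C}^{n}$, because a somewhere-dense orbit of a linear abelian group is automatically dense. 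I would state and use this equivalence as the bridge between the conclusion of Theorem~\ref{T:3} and the statement we want.

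Next, I would unwind Theorem~\ref{T:3} in this setting. It gives $f_1,\dots,f_{2n+1}\in exp^{-1}(\widetilde{G})$ with $f_{2n+1}\in vect(f_1,\dots,f_{2n})$ and $\mathbb{Z}f_1(x)+\dots+\mathbb{Z}f_{2n+1}(x)$ dense in $\mathbb{C}^n$. Writing $B_i = f_i$ (linear maps), we immediately get $B_1,\dots,B_{2n+1}\in exp^{-1}(\widetilde{G})$ with $\mathbb{Z}B_1x+\dots+\mathbb{Z}B_{2n+1}x$ dense. The remaining gap is that Corollary~\ref{C:6} asks for $B_i \in exp^{-1}(G)$, not $exp^{-1}(\widetilde{G})$; but since $\widetilde{G}$ is the closure of $G$, one argues that the arithmetic density condition is an open condition on the tuple $(B_1x,\dots,B_{2n+1}x)$ (more precisely, density of $\sum \mathbb{Z}v_i$ in $\mathbb{C}^n$ is stable under small perturbations of the $v_i$ once $2n+1$ vectors are in sufficiently general position), so we may perturb the $B_i\in exp^{-1}(\widetilde{G})$ to nearby elements of $exp^{-1}(G)$ while preserving density — here I would cite the relevant perturbation lemma established earlier in the paper for Theorem~\ref{T:3}. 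The converse direction is easier: if such $B_1,\dots,B_{2n+1}\in exp^{-1}(G)$ exist with $\sum\mathbb{Z}B_ix$ dense, then after discarding redundant $B_i$ we may assume $B_1,\dots,B_{2n}$ span their $\mathbb{R}$-vector space and $B_{2n+1}$ lies in it (a dimension count: $vect(B_1,\dots,B_{2n+1})$ evaluated at $x$ must be all of $\mathbb{C}^n \cong \mathbb{R}^{2n}$ for the group to be dense, forcing at most $2n$ independent ones after the relation), so the hypotheses of Theorem~\ref{T:3} are met and $\overset{\circ}{\overline{G(x)}}\neq\emptyset$, hence $\overline{G(x)}=\mathbb{C}^n$ by the linear equivalence above.

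The main obstacle I anticipate is justifying the passage from $exp^{-1}(\widetilde{G})$ to $exp^{-1}(G)$ — i.e., that one does not genuinely need the closure. This is precisely the point where the linear structure must be used: for a general Lie group of diffeomorphisms the closure $\widetilde{G}$ is essential (as the authors note, $\widetilde{\mathrm{g}}$ is not explicitly computable), but for $GL(n,\mathbb{C})$ the exponential image of the Lie algebra of $\widetilde{G}$ can be approximated within $G$ itself because $G$ is dense in $\widetilde{G}$ and $exp$ is continuous and open onto a neighbourhood of the identity. I would handle this by first producing the $B_i$ in $exp^{-1}(\widetilde{G})$ from Theorem~\ref{T:3}, then noting that density of $\sum\mathbb{Z}B_ix$ is an open condition (via property $\mathcal{D}(x)$ of Corollary~\ref{C:5}: the rank condition on the $(2n+1)\times(2n+1)$ matrices is open), and finally invoking density of $G$ in $\widetilde{G}$ to replace each $B_i$ by a nearby element of $exp^{-1}(G)$. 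A secondary, more routine obstacle is the clean statement and proof of the ``somewhere dense $\Rightarrow$ dense'' equivalence for linear abelian actions, which should follow from the normal-form results of \cite{aAhM05} but needs to be cited or sketched.
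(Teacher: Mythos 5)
Your overall route is the paper's route: combine Theorem~\ref{T:3} (equivalently Corollary~\ref{C:5}) with the Ayadi--Marzougui result that a locally dense orbit of an abelian subgroup of $GL(n,\mathbb{C})$ is dense --- that result is exactly Lemma~\ref{LLL:12}, and it is the only linear-specific input the paper uses. The paper's own proof is one line: since the $B_j$ commute and $e^{B_j}\in G$, the whole additive group $\mathbb{Z}B_{1}+\dots+\mathbb{Z}B_{2n+1}$ sits inside $exp^{-1}(G)$, and then Corollary~\ref{C:5} and Lemma~\ref{LLL:12} are invoked. So your skeleton is correct; the problems are in the two patches you add.

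The perturbation step is a genuine gap. Density of $\mathbb{Z}v_{1}+\dots+\mathbb{Z}v_{2n+1}$ in $\mathbb{C}^{n}$ is \emph{not} an open condition on the tuple $(v_{1},\dots,v_{2n+1})$: property $\mathcal{D}(x)$ is the intersection of infinitely many open rank conditions, one for each $(s_{1},\dots,s_{2n+1})\in\mathbb{Z}^{2n+1}\setminus\{0\}$, hence only a $G_{\delta}$. Already for $n$ replaced by $1$ and two generators, $\mathbb{Z}+\mathbb{Z}\alpha$ is dense in $\mathbb{R}$ exactly when $\alpha$ is irrational, and every neighbourhood of $\alpha$ contains rationals, for which the group is discrete; ``general position'' does not rescue this. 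So you cannot replace the $B_i\in exp^{-1}(\widetilde{G})$ produced by Theorem~\ref{T:3} by nearby elements of $exp^{-1}(G)$ while preserving density. (You have correctly located a soft spot --- the paper states Corollary~\ref{C:5} with $exp^{-1}(G)$ although Theorem~\ref{T:3} only delivers $exp^{-1}(\widetilde{G})$, and never explains the passage --- but openness is not the fix.) A second, smaller gap is in your converse: from density of $\sum\mathbb{Z}B_{i}x$, Lemma~\ref{L:BBB017} does give that $(B_{1}x,\dots,B_{2n}x)$ is an $\mathbb{R}$-basis of $\mathbb{C}^{n}$, hence that $B_{1},\dots,B_{2n}$ are linearly independent, but it does \emph{not} force $B_{2n+1}\in vect(B_{1},\dots,B_{2n})$: the evaluation $B\mapsto Bx$ maps the $2n^{2}$-dimensional real space $M_{n}(\mathbb{C})$ onto the $2n$-dimensional space $\mathbb{C}^{n}$, so a linear relation among the vectors $B_{i}x$ says nothing about the matrices $B_{i}$. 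That hypothesis of Theorem~\ref{T:3} therefore has to be produced by some other argument (or the theorem reproved using only the relation at the level of $\Phi_{x}$), not by a dimension count.
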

\medskip

This paper is organized as follows: In Section 2 we prove
Theorem~\ref{T:1}. Section3 is devoted to prove Theorem ~\ref{T:3}
and Corollaries \ref{C:5}, \ref{C:6}.
\bigskip

\section{\bf Proof of Theorem ~\ref{T:1} } We will cite the
definition of the exponential map given in ~\cite{IL}.

\subsection{\bf Exponential map } In this section, we illustrate
the theory developed of the group $Diff(\mathbb{C}^{n})$ of
diffeomorphisms of $\mathbb{C}^{n}$. For simplicity, throughout
this section we only consider the case of $\mathbb{C}=\mathbb{R}$;
however, all results also hold for complexes case. The group
$Diff(\mathbb{R}^{n})$ is not a Lie group (it is
infinite-dimensional), but in many way it is similar to Lie
groups. For example, it easy to define what a smooth map from some
Lie group $G$ to $Diff(\mathbb{R}^{n})$ is: it is the same as an
action of $G$ on $\mathbb{R}^{n}$ by diffeomorphisms. Ignoring the
technical problem with infinite-dimensionality for now, let us try
to see what is the natural analog of the Lie algebra $\mathrm{g}$
for the group $G$. It should be the tangent space at the identity;
thus, its elements are derivatives of one-parameter families of
diffeomorphisms.\ \\ Let $\varphi^{t}: G\longrightarrow G$ be
one-parameter family of diffeomorphisms. Then, for every point
$a\in G$, $\varphi^{t}(a)$ is a curve in $G$ and thus
$\frac{\partial}{\partial t}\varphi^{t}(a)_{/t=0}=\xi(a)\in
T_{a}G$ is a tangent vector to $G$ at $m$. In other words,
$\frac{\partial}{\partial t}\varphi^{t}$ is a vector field on
$G$.\medskip

\ \\ The exponential map $exp: \mathrm{g}\longrightarrow G$ is
defined by $exp(x)=\gamma_{x}(1)$ where $\gamma_{x}(t)$ is the
one-parameter subgroup with tangent vector at $1$ equal to $x$.\
\\ If $\xi\in\mathrm{g}$ is a vectorfield, then $exp(t\xi)$ should
be one-parameter family of diffeomorphisms whose derivative is
vector field $\xi$. So this is the solution of differential
equation $$\frac{\partial}{\partial
t}\varphi^{t}(a)_{/t=0}=\xi(a).$$ \ \\ In other words,
$\varphi^{t}$ is the time $t$ flow of the vector field. Thus, it
is natural to define the Lie algebra of $G$ to be the space
$\mathrm{g}$ of all smooth vector $\xi$ fields on $\mathbb{R}^{n}$
such that $exp(t\xi)\in G$ for every $t\in \mathbb{R}$.

\bigskip

We will use the definition of Whitney topology given in
~\cite{WDM}.

\subsection{{\bf Whitney Topology on $C^{0}(\mathbb{C}^{n},\mathbb{C}^{n})$}}
 For each open subset $U\subset \mathbb{C}^{n}\times \mathbb{C}^{n}$ let $\widetilde{U}\subset \mathcal{C}^{0}(\mathbb{C}^{n}, \mathbb{C}^{n})$ be the
set of continuous functions $g$, whose graphs
$\{(x,g(x))\in\mathbb{C}^{n}\times \mathbb{C}^{n},\ \ x\in
\mathbb{C}^{n}\}$ is contained in $U$. \medskip We want to
construct a neighborhood basis of each function
$f\in\mathcal{C}^{0}(\mathbb{C}^{n}, \mathbb{C}^{n})$. Let
$K_{j}=\{x\in \mathbb{C}^{n},\ \ \|x\|\leq j\}$ be a countable
family of compact sets (closed balls with center 0) covering
$\mathbb{C}^{n}$ such that $K_{j}$ is contained in the interior of
$K_{j+1}$. Consider then the compact subsets $L_{j}
=K_{j}\backslash \overset{\circ}{\overbrace{K_{j-1}}}$, which are
compact sets, too. Let $\epsilon= (\varepsilon_{j})_{j}$ be a
sequence of positive numbers and then define $$V_{(f;\epsilon)}
=\{f\in\mathcal{C}^{0}(\mathbb{C}^{n}, \mathbb{C}^{n})\ :\
\|f(x)-g(x)\|< \varepsilon_{j},\ \mathrm{for\ any} \ x\in L_{j},\
\forall j\}.$$ We claim this is a neighborhood system of the
function $f$ in $\mathcal{C}^{0}(\mathbb{C}^{n}, \mathbb{C}^{n})$.
Since $L_{i}$ is compact, the set
$U=\{(x,y)\in\mathbb{C}^{n}\times\mathbb{C}^{n}\ :\ \|f(x)-g(x)\|<
\varepsilon_{j},\ if\  x\in L_{j}\}$ is open. Thus,
$V_{(f;\epsilon)}= \widetilde{U}$ is an open neighborhood of $f$.
On the other hand, if $O$ is an open subset of
$\mathbb{C}^{n}\times\mathbb{C}^{n}$ which contains the graph of
$f$, then since $L_{j}$ is compact, it follows that there exists
$\varepsilon_{j}>0$ such that if $x\in L_{j}$ and $\|y-f(x)\|<
\varepsilon_{j}$, then $(x;y)\in O$. Thus, taking
$\widetilde{\epsilon}= (\varepsilon_{j})_{j}$ we have
$V_{(f;\widetilde{\epsilon})}\subset \widetilde{O}$, so we have
obtained the family $V_{(f;\epsilon)}$ is a neighborhood system of
$f$. Moreover, for each given $\epsilon= (\varepsilon_{j})_{j}$,
we can find a $C ^{\infty}$-function $\epsilon:
\mathbb{C}^{n}\longrightarrow\mathbb{R}_{+}$, such that
$\epsilon(x)< \varepsilon_{j}$ for any $x\in L_{j}$. It follows
that the family $V_{(f;\epsilon)}
=\{(x,y)\in\mathbb{C}^{n}\times\mathbb{C}^{n}\ :\ \|f(x)-g(x)\|<
\epsilon(x)\}$ is also a neighborhood system.
\medskip
\
\\
Denote by:\ \\ - $\widetilde{G}=\overline{G}\cap
Diff^{r}(\mathbb{C}^{n})$, where $\overline{G}$ is the closure of
$G$ in $C^{r}(\mathbb{C}^{n}, \mathbb{C}^{n})$ for the withney
topology defined above. So $\widetilde{G}$ is an abelian lie
subgroup of $\Gamma$.\ \\ - $\mathcal{A}(\widetilde{G})$ the
algebra generated by $G$. See that $G\subset
\mathcal{A}(\widetilde{G})$.\
\\ - $\Phi_{x}:
\mathcal{A}(\widetilde{G})\longrightarrow \mathbb{C}^{n}$ the
linear map given by $\Phi_{x}(f)=f(x)$, $f\in
\mathcal{A}(\widetilde{G})$.\ \\ -
$E(x)=\Phi_{x}(\mathcal{A}(G))$.\medskip

\begin{lem}\label{L:aaaaa101001} The linear map
$\Phi_{x}:\mathcal{A}(\widetilde{G})\longrightarrow E(x)$ is
continuous.
\end{lem}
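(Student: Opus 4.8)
The plan is to establish the two properties packed into the statement: that $\Phi_{x}$ is $\mathbb{R}$-linear, and that it is continuous, with $\mathcal{A}(\widetilde{G})$ carrying the topology induced from the Whitney topology on $C^{r}(\mathbb{C}^{n},\mathbb{C}^{n})$ and $E(x)$ the topology induced from $\mathbb{C}^{n}$. Linearity is immediate from the pointwise nature of the operations: for $f,g\in\mathcal{A}(\widetilde{G})$ and $\lambda,\mu\in\mathbb{R}$ one has $\Phi_{x}(\lambda f+\mu g)=(\lambda f+\mu g)(x)=\lambda f(x)+\mu g(x)=\lambda\Phi_{x}(f)+\mu\Phi_{x}(g)$.

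For continuity, since $x$ is fixed it is enough to verify continuity of $\Phi_{x}$ at an arbitrary point $f\in\mathcal{A}(\widetilde{G})$. Given $\varepsilon>0$, I would produce a basic Whitney neighborhood $V$ of $f$ such that $\Phi_{x}(g)$ lies in the open ball $B(f(x),\varepsilon)$ of $\mathbb{C}^{n}$ for every $g\in V$. Because the compact sets $L_{j}$ considered above cover $\mathbb{C}^{n}$, there is an index $j_{0}$ with $x\in L_{j_{0}}$. Taking the sequence $\epsilon=(\varepsilon_{j})_{j}$ with $\varepsilon_{j}=\varepsilon$ for all $j$ and setting $V=V_{(f;\epsilon)}$ --- an open neighborhood of $f$ by the construction recalled above --- one gets, for every $g\in V$, that $x\in L_{j_{0}}$ forces $\norm{f(x)-g(x)}<\varepsilon_{j_{0}}=\varepsilon$, i.e. $\Phi_{x}(g)\in B(\Phi_{x}(f),\varepsilon)$. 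Thus $\Phi_{x}$ is continuous at $f$; as $f$ was arbitrary, $\Phi_{x}$ is continuous on $\mathcal{A}(\widetilde{G})$, and restricting the codomain to the subspace $E(x)$ does not affect continuity.

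The computation is elementary, so I do not anticipate a genuine obstacle; this lemma is a preparatory step whose role is to let $\Phi_{x}$ interact well with closures later. The only points that deserve a moment's care are that the basic Whitney neighborhoods are indexed by whole sequences of positive numbers (equivalently by positive functions on $\mathbb{C}^{n}$) and not by a single $\varepsilon$, so one must isolate the slot $j_{0}$ attached to the compact piece $L_{j_{0}}$ containing $x$; and that $\Phi_{x}$ really maps into $E(x)$, which is clear since $E(x)$ is the image of the algebra under evaluation at $x$, and in any event $E(x)$ is a finite-dimensional, hence closed, real subspace of $\mathbb{C}^{n}$, so that even the continuous image of the closure of the algebra would still lie inside $E(x)$.
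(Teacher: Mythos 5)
Your argument is correct and coincides with the paper's own proof: both take the constant sequence $\varepsilon_{j}=\varepsilon$, form the basic Whitney neighborhood $V_{(f;\epsilon)}$, and use the index $j_{0}$ with $x\in L_{j_{0}}$ to get $\norm{f(x)-g(x)}<\varepsilon$. The added remarks on linearity and on the codomain $E(x)$ are harmless elaborations of what the paper leaves implicit.
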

\medskip

\begin{proof} Firstly, we take the restriction of the Whitney
topology to $\mathcal{A}(\widetilde{G})$. Secondly, let $f\in
\mathcal{A}(\widetilde{G})$ and $\varepsilon>0$. Then for
$\epsilon=(\varepsilon_{j})_{j}$ with
$\varepsilon_{j}=\varepsilon$ and for $V_{(f;\epsilon)}$ be a
neighborhood system of $f$, we obtain: for every $g\in
V_{(f;\epsilon)}\cap \mathcal{A}(\widetilde{G})$ and for every
$y\in L_{j}$, $\|f(y)-g(y)\|<\varepsilon$, $\forall j$. In
particular for $j=j_{0}$ in which $x\in L_{j_{0}}$, we have
$\|f(x)-g(x)\|<\varepsilon$, so
$\|\Phi_{x}(f)-\Phi_{x}(g)\|<\varepsilon$. It follows that
$\Phi_{x}$ is continuous.
\end{proof}
\medskip

\subsection{\bf Proof of Theorem ~\ref{T:1} }

\medskip

\begin{prop}\label{p:1} $($\cite{IL}, Theorem 3.29$)$ Let $G$ be a Lie group acting on  $\mathbb{C}^{n}$ with
 lie algebra $\widetilde{\mathrm{g}}$  and let $u \in \mathbb{C}^{n}$.\
 \\
(i) The stabilizer $G_{x} = \{B \in G :\ \ Bu = u\}$ is a closed
Lie subgroup in $G$, with Lie algebra $\mathfrak{h}_{x} = \{B \in
\widetilde{\mathrm{g}}:\  Bu = 0\}$. \
\\
(ii) The map $G_{/G_{x}}\longrightarrow \mathbb{C}^{n}$ given by
$B.G_{x}\longmapsto Bu$ is an immersion. Thus,
 the orbit $G(x)$ is an immersed submanifold in $\mathbb{C}^{n}$.
 In particular
 $\mathrm{dim}(G(x))=\mathrm{dim}(\widetilde{\mathrm{g}})-\mathrm{dim}(\mathfrak{h}_{x})$.
\end{prop}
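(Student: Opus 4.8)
The assertion is the standard orbit--stabilizer theorem for smooth actions of finite-dimensional Lie groups (the ambient group $\Gamma\subset\mathrm{Diff}^{r}(\mathbb{C}^{n})$ plays no role here, only that $G$ is a genuine Lie group acting smoothly on $\mathbb{C}^{n}$), so the plan is to reduce everything to Cartan's closed-subgroup theorem and the construction of homogeneous spaces. First I would introduce the orbit map $\alpha_{u}\colon G\to\mathbb{C}^{n}$, $\alpha_{u}(B)=Bu$, which is smooth since the action $G\times\mathbb{C}^{n}\to\mathbb{C}^{n}$ is smooth. For (i), observe that $G_{x}=\alpha_{u}^{-1}(\{u\})$ is closed (preimage of a closed set under a continuous map) and is visibly a subgroup, so by Cartan's closed-subgroup theorem it is an embedded Lie subgroup of $G$. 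To pin down its Lie algebra I would use that $B\in\mathrm{Lie}(G_{x})$ precisely when $\exp(tB)\in G_{x}$ for all $t\in\mathbb{R}$, i.e.\ $\exp(tB)u=u$ for all $t$; differentiating at $t=0$ gives $Bu=0$ (the value at $u$ of the fundamental vector field $X_{B}$ of $B$), whence $\mathrm{Lie}(G_{x})\subseteq\mathfrak{h}_{x}$. Conversely, if $Bu=0$ then the curve $t\mapsto\exp(tB)u$ solves $c'(t)=X_{B}(c(t))$, $c(0)=u$, and since $u$ is a zero of $X_{B}$, uniqueness of solutions to this ODE forces $c\equiv u$, so $\exp(tB)\in G_{x}$ for all $t$ and $B\in\mathrm{Lie}(G_{x})$. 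This yields $\mathfrak{h}_{x}=\mathrm{Lie}(G_{x})$.

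For (ii), since $G_{x}$ is a closed subgroup the quotient $G/G_{x}$ carries a unique smooth structure for which $\pi\colon G\to G/G_{x}$ is a submersion, and $\alpha_{u}$ is constant on cosets of $G_{x}$, hence descends to a smooth injective map $\bar{\alpha}_{u}\colon G/G_{x}\to\mathbb{C}^{n}$, $BG_{x}\mapsto Bu$, whose image is $G(x)$. Next I would compute the differential of $\bar{\alpha}_{u}$ at the base point $o=eG_{x}$: using the identification $T_{o}(G/G_{x})\cong\widetilde{\mathrm{g}}/\mathfrak{h}_{x}$ (because $d\pi_{e}$ is onto with kernel $T_{e}G_{x}=\mathfrak{h}_{x}$), the map $d(\bar{\alpha}_{u})_{o}$ is $B+\mathfrak{h}_{x}\mapsto Bu$, whose kernel is $\mathfrak{h}_{x}/\mathfrak{h}_{x}=0$ by part (i); so $d(\bar{\alpha}_{u})_{o}$ is injective. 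Finally, the relation $\bar{\alpha}_{u}(g\cdot BG_{x})=g\cdot\bar{\alpha}_{u}(BG_{x})$ shows $\bar{\alpha}_{u}$ intertwines left translation on $G/G_{x}$ with the $G$-action on $\mathbb{C}^{n}$; since left translations and the maps $y\mapsto gy$ are diffeomorphisms, injectivity of the differential at $o$ propagates to every point, so $\bar{\alpha}_{u}$ is an injective immersion and $G(x)$ is an immersed submanifold. The dimension formula is then immediate: $\dim G(x)=\dim(G/G_{x})=\dim G-\dim G_{x}=\dim\widetilde{\mathrm{g}}-\dim\mathfrak{h}_{x}$.

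The main work here is bookkeeping rather than a single deep step: the delicate points are the uniqueness-of-solutions argument identifying $\mathrm{Lie}(G_{x})$ with $\mathfrak{h}_{x}$, and in (ii) the two differential identifications --- namely $T_{o}(G/G_{x})\cong\widetilde{\mathrm{g}}/\mathfrak{h}_{x}$ together with the description of $d(\bar{\alpha}_{u})_{o}$, and the equivariance argument upgrading injectivity from $o$ to all of $G/G_{x}$. I would also flag explicitly that one can conclude only that $G(x)$ is \emph{immersed}, not embedded: its manifold topology (inherited from $G/G_{x}$) may be strictly finer than the subspace topology from $\mathbb{C}^{n}$. As all of this is classical, in the paper it suffices to cite \cite{IL}, Theorem 3.29.
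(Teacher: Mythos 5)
Your proof is correct. The paper itself gives no argument for this proposition --- it is quoted verbatim from the cited reference (\cite{IL}, Theorem 3.29) --- and what you have written is precisely the standard orbit--stabilizer argument that reference supplies: Cartan's closed-subgroup theorem for (i), the ODE-uniqueness identification of $\mathrm{Lie}(G_{x})$ with $\mathfrak{h}_{x}=\{B:\ Bu=0\}$, and for (ii) the induced injective map on $G/G_{x}$ whose differential at the base point is injective and whose equivariance propagates that injectivity everywhere. Your closing caveat that the orbit is only \emph{immersed}, not necessarily embedded, is exactly the right thing to flag, since the rest of the paper trades on the distinction between $G(x)$ and $\overline{G(x)}$.
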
\
\medskip
\ \\ Here $\mathfrak{h}_{x}=Ker(\Phi_{x})$ since
$Ker(\Phi_{x})\subset \widetilde{\mathrm{g}}$. Write: \
\\
- $\widetilde{L}$ the vector subspace of $\widetilde{\mathrm{g}}$
supplement to $Ker(\Phi_{x})$, (i.e.
 $\widetilde{L}\oplus
Ker(\Phi_{x})=\widetilde{\mathrm{g}}$).\ It is clear that
dim$(\widetilde{L})=cod(Ker(\Phi_{x}))\leq n$, then
$\widetilde{L}$ is closed.\
\\
- $\ exp:\widetilde{L}\oplus Ker(\Phi_{x})\longrightarrow
\widetilde{G}$ the exponential map. Since $\widetilde{G}$ is
abelian, so is $\widetilde{\mathrm{g}}$, then $\
exp(f+h)=exp(f)\circ exp(h)$ for every $f\in \widetilde{L}$ and
$h\in Ker(\Phi_{x})$.\
\\
- $\widetilde{G}_{x}$ the stabilizer of $\widetilde{G}$ on the
point $u$. So it is a lie subgroup of $\widetilde{G}$ with lie
algebra $Ker(\Phi_{x})$.
 \bigskip

As a directly consequence of Proposition 5.13, given in
\cite{ASRW}, applied to $\Gamma$, we have the following Lemma:

\begin{lem}\label{L:001000+}$($\cite{ASRW}, Proposition 5.13$)$ Let $G$ be an abelian subgroup of a lie group $\Gamma$. There exists an open neighborhood $U$ of $0$ in
$H$ such that $\ exp: U\longrightarrow \ exp(U)$ is a
diffeomorphism and $\ exp(U\cap
  \widetilde{\mathrm{g}})=\ exp(U)\cap \widetilde{G}$.
 \end{lem}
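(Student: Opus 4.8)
The plan is to read the statement off from Proposition~5.13 of \cite{ASRW}, applied to the ambient Lie group $\Gamma$ and the Lie subgroup $\widetilde{G}$, after recording a few standard facts about exponential maps. First, since $\widetilde{G}$ is a Lie subgroup of $\Gamma$ with Lie algebra $\widetilde{\mathrm{g}}\subset H$, the exponential map of $\Gamma$ restricted to $\widetilde{\mathrm{g}}$ agrees with the exponential map of $\widetilde{G}$; in particular $\exp(\widetilde{\mathrm{g}})\subset\widetilde{G}$, so the inclusion $\exp(U\cap\widetilde{\mathrm{g}})\subset\exp(U)\cap\widetilde{G}$ holds for \emph{every} $U\subset H$, and only the reverse inclusion is at stake. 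Second, $\exp\colon H\to\Gamma$ is a local diffeomorphism at $0$ (its differential there is the identity of $H$), so there is an open $U_{0}\ni 0$ in $H$ on which $\exp$ restricts to a diffeomorphism onto an open neighborhood of $\mathrm{id}$ in $\Gamma$.

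Next I would produce an exponential slice for $\widetilde{G}$. Choose a vector space complement $H=\widetilde{\mathrm{g}}\oplus W$ and set $\Psi\colon\widetilde{\mathrm{g}}\times W\to\Gamma$, $\Psi(v,w)=\exp(v)\,\exp(w)$ (the product, since $\Gamma$ need not be abelian). Under the identification $T_{\mathrm{id}}\Gamma\cong H$ its differential at $(0,0)$ is $(v,w)\mapsto v+w$, which is an isomorphism, so by the inverse function theorem $\Psi$ maps some product neighborhood $V_{1}\times V_{2}$ of $(0,0)$ diffeomorphically onto an open neighborhood $N\ni\mathrm{id}$. The key point is then that, after shrinking $V_{1},V_{2}$, one has $\widetilde{G}\cap N=\exp(V_{1})$: given $g=\exp(v)\exp(w)\in\widetilde{G}\cap N$ one gets $\exp(w)=\exp(-v)g\in\widetilde{G}$ with $w\in W$ small, and one must conclude $w=0$. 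This last implication is precisely what Proposition~5.13 of \cite{ASRW} supplies for the subgroup $\widetilde{G}$. Taking $U=U_{0}\cap(\exp|_{U_{0}})^{-1}(N)$, and shrinking further so that $\exp(U)\subset N$ and the slice description transfers, $\exp|_{U}$ is a diffeomorphism onto $\exp(U)$ and $\exp(U)\cap\widetilde{G}=\exp(U\cap\widetilde{\mathrm{g}})$ by combining the slice identity with the trivial inclusion noted above.

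The main obstacle is not the chart construction but the slicing step $\exp(w)\in\widetilde{G},\ w\in W \text{ small}\ \Rightarrow\ w=0$: for a general immersed Lie subgroup this fails (think of an irrational winding line in a torus), so one must know that $\widetilde{G}$ belongs to the class of subgroups covered by Proposition~5.13 of \cite{ASRW}, concretely that $\widetilde{G}$ is an embedded Lie subgroup of $\Gamma$. This holds here because $\widetilde{G}=\overline{G}\cap Diff^{r}(\mathbb{C}^{n})$ is defined by intersecting with a closure, hence is closed, equivalently embedded, in $\Gamma$; once that is checked, the lemma is immediate from \cite{ASRW}.
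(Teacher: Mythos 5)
The paper offers no proof of this lemma at all: it is imported verbatim as Proposition~5.13 of \cite{ASRW}, so there is nothing of the authors' to compare your argument against line by line. Your reconstruction is the standard one --- canonical coordinates of the second kind $\Psi(v,w)=\exp(v)\exp(w)$ for a splitting $H=\widetilde{\mathrm{g}}\oplus W$, the inverse function theorem, and the slice identity $\widetilde{G}\cap N=\exp(V_{1})$ --- and it is sound in outline; you also correctly isolate the only genuinely nontrivial step (small $w\in W$ with $\exp(w)\in\widetilde{G}$ forces $w=0$) and correctly observe that this is exactly where embeddedness, as opposed to mere immersedness, of $\widetilde{G}$ enters, which is the hypothesis under which the cited proposition applies. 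The one point where your justification is thinner than it reads is the closing sentence: $\widetilde{G}=\overline{G}\cap Diff^{r}(\mathbb{C}^{n})$ is relatively closed in $Diff^{r}(\mathbb{C}^{n})$ for the Whitney topology on $C^{r}(\mathbb{C}^{n},\mathbb{C}^{n})$, but what the slice argument needs is that $\widetilde{G}$ is closed in the Lie group $\Gamma$ for its manifold topology, and that topology is not a priori the one induced from the infinite-dimensional ambient space (the strong Whitney topology is notoriously fine; even a translation flow $t\mapsto(x\mapsto x+t)$ fails to be continuous into it). One must either verify that the two topologies agree on $\Gamma$, or argue directly that $\widetilde{G}$ is closed in $\Gamma$ --- for instance via continuity of the inclusion $\Gamma\hookrightarrow C^{r}(\mathbb{C}^{n},\mathbb{C}^{n})$, which itself requires justification here. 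This is a defect of the paper's setup as much as of your write-up. Two smaller housekeeping points: in the last step you should arrange $V_{1}\subset U_{0}$ before invoking injectivity of $\exp$ on $U_{0}$ to conclude $u=v\in\widetilde{\mathrm{g}}$ from $\exp(u)=\exp(v)$ with $u\in U$, $v\in V_{1}$; and the trivial inclusion $\exp(U\cap\widetilde{\mathrm{g}})\subset\exp(U)\cap\widetilde{G}$ presupposes $\exp(\widetilde{\mathrm{g}})\subset\widetilde{G}$, i.e.\ that $\widetilde{\mathrm{g}}$ really is the Lie algebra of $\widetilde{G}$ in the sense of the paper's Section~2.1, which is consistent with how the paper uses it.
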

\
\\
Denote by  $V=exp(U)$, where $U$ is the open set defined in
Lemma~\ref{L:001000+}.\medskip

\begin{lem}\label{L:000001010} We have  $\overline{G(x)}=\overline{\widetilde{G}(x)}$.
\end{lem}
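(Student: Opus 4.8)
\textbf{Proof plan for Lemma~\ref{L:000001010}.}

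The plan is to prove the two inclusions $\overline{G(x)}\subset\overline{\widetilde{G}(x)}$ and $\overline{\widetilde{G}(x)}\subset\overline{G(x)}$ separately. The first inclusion is immediate: since $G\subset\widetilde{G}$ by the definition $\widetilde{G}=\overline{G}\cap Diff^{r}(\mathbb{C}^{n})$, we have $G(x)\subset\widetilde{G}(x)$, hence $\overline{G(x)}\subset\overline{\widetilde{G}(x)}$. The content of the lemma is therefore the reverse inclusion, for which it suffices to show $\widetilde{G}(x)\subset\overline{G(x)}$ and then take closures.

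To establish $\widetilde{G}(x)\subset\overline{G(x)}$, I would fix $f\in\widetilde{G}$ and show $f(x)\in\overline{G(x)}$. Since $\widetilde{G}=\overline{G}\cap Diff^{r}(\mathbb{C}^{n})$, the map $f$ lies in the closure of $G$ for the Whitney topology on $C^{r}(\mathbb{C}^{n},\mathbb{C}^{n})$, so there is a sequence (or net) $(g_{k})_{k}$ in $G$ with $g_{k}\to f$ in the Whitney topology. The key point is that Whitney convergence forces pointwise convergence at $x$: given $\varepsilon>0$, taking $\epsilon=(\varepsilon_{j})_{j}$ with all $\varepsilon_{j}=\varepsilon$, eventually $g_{k}\in V_{(f;\epsilon)}$, and since $x$ lies in some shell $L_{j_{0}}$ we get $\|g_{k}(x)-f(x)\|<\varepsilon$ for $k$ large. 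Hence $g_{k}(x)\to f(x)$; since each $g_{k}(x)\in G(x)$, this gives $f(x)\in\overline{G(x)}$. This is exactly the continuity argument already recorded in the proof of Lemma~\ref{L:aaaaa101001} applied to the evaluation map, and in fact one may simply invoke that lemma: $\Phi_{x}$ restricted to $\mathcal{A}(\widetilde{G})$ is continuous, so it maps $\overline{G}\cap\mathcal{A}(\widetilde{G})$ into $\overline{\Phi_{x}(G)}=\overline{G(x)}$, and $\widetilde{G}\subset\overline{G}\cap\mathcal{A}(\widetilde{G})$.

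Combining the two inclusions yields $\overline{G(x)}=\overline{\widetilde{G}(x)}$. I expect no serious obstacle here: the only subtle point is the interplay between the Whitney topology and pointwise evaluation, but this has already been dispatched in Lemma~\ref{L:aaaaa101001}, so the argument is essentially a bookkeeping exercise in assembling the pieces. The one thing to be careful about is to justify that sequences suffice (or to argue directly with nets/neighborhoods) when passing from Whitney-closure to pointwise limits; using the explicit neighborhood basis $V_{(f;\epsilon)}$ with constant $\epsilon_{j}$ as above sidesteps any metrizability concern.
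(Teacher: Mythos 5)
Your proof is correct, and it rests on the same key ingredient as the paper's — continuity of the evaluation map $\Phi_{x}$ for the Whitney topology (Lemma~\ref{L:aaaaa101001}) — but it is organized along a genuinely different, and in fact cleaner, route. You prove the pointwise inclusion $\widetilde{G}(x)\subset\overline{G(x)}$ by observing that each $f\in\widetilde{G}$ lies in the Whitney closure of $G$, so every basic neighborhood $V_{(f;\epsilon)}$ with constant $\varepsilon_{j}=\varepsilon$ meets $G$, giving some $g\in G$ with $\|g(x)-f(x)\|<\varepsilon$; then you conclude by taking closures. The paper instead proves the inclusion $\overline{\overline{G}(x)}\subset\overline{G(x)}$: it writes $v\in\overline{\overline{G}(x)}$ as a limit of $f_{m}(x)$ with $f_{m}\in\overline{G}$, approximates each $f_{m}$ by a sequence $(f_{m,k})_{k}$ in $G$, and extracts a diagonal sequence $f_{m,k_{m}}(x)\to v$. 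Your version buys two things: it avoids the double-limit/diagonal bookkeeping, and, by arguing directly with the neighborhood basis rather than with sequences, it sidesteps the point you rightly flag — the paper tacitly assumes that every element of the Whitney closure of $G$ is a limit of a sequence in $G$, which is not automatic since the Whitney topology on maps of the non-compact space $\mathbb{C}^{n}$ is not first countable (sequential convergence in it is very restrictive). The paper's formulation nominally yields the slightly stronger identity $\overline{G(x)}=\overline{\overline{G}(x)}$, but your argument extends to that as well with the same neighborhood-basis reasoning, so nothing essential is lost.
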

\medskip

\begin{proof}   It is clear that $\overline{G(x)}\subset\overline{\widetilde{G}(x)}\subset\overline{\overline{G}(x)}$.
Let $v\in \overline{\overline{G}(x)}$, so
$v=\underset{m\to+\infty}{lim}f_{m}(x)$ for some sequence
$(f_{m})_{m\in \mathbb{N}}$ in $\overline{G}$. Then for every
$m\in\mathbb{N}$, there exists a sequence
 $(f_{m,k})_{k\in\mathbb{N}}$ in $G$ such that $\underset{k\to
 +\infty}{lim}f_{m,k}=f_{m}$, so by continuity of $\Phi_{x}$ (Lemma~\ref{L:aaaaa101001}), we
 have $\underset{k\to
 +\infty}{lim}f_{m,k}(x)=f_{m}(x)$,
thus for every $\varepsilon>0$, \ there exists $M>0$ and  for
every $m\geq M$, there exists $k_{m}>0$,
 such that $\|f_{m}(x)-v\|<\frac{\varepsilon}{2}$ and  \
for every $k\geq k_{m},\ \ \
\|f_{m,k}(x)-f_{m}(x)\|<\frac{\varepsilon}{2}$.\ Then,  for every
$m>M$, $$\|f_{m,k_{m}}(x)-v\|\leq \|f_{m,k_{m}}(x)-f_{m}(x)\|+
\|f_{m}(x)-v\|<\varepsilon,$$ therefore $\underset{m\to
+\infty}{lim}f_{m,k_{m}}(x)=v$. Hence $v\in \overline{G(x)}$. It
follows that
 $\overline{\widetilde{G}(x)}\subset\overline{\overline{G}(x)}\subset
 \overline{G(x)}$.
\end{proof}
\medskip

\begin{lem}\label{L:2} Let $W=\Phi_{x}(V)$. Then  $\Phi_{x}^{-1}(\widetilde{G}(x)\cap W)\cap V=\widetilde{G}\cap V$.
\end{lem}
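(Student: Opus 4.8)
The plan is to prove the two inclusions separately. The inclusion $\widetilde{G}\cap V\subseteq\Phi_{x}^{-1}(\widetilde{G}(x)\cap W)\cap V$ is immediate: if $f\in\widetilde{G}\cap V$ then $\Phi_{x}(f)=f(x)\in\widetilde{G}(x)$ and also $\Phi_{x}(f)\in\Phi_{x}(V)=W$, whence $f(x)\in\widetilde{G}(x)\cap W$ and $f\in\Phi_{x}^{-1}(\widetilde{G}(x)\cap W)\cap V$. All the work is in the reverse inclusion, for which I would combine Lemma~\ref{L:001000+}, the orbit structure given by Proposition~\ref{p:1}, and the additivity of $\exp$ on the abelian Lie algebra $\widetilde{\mathrm{g}}$.

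So fix $f\in V$ with $f(x)\in\widetilde{G}(x)\cap W$, and write $f=\exp(\xi)$ with $\xi\in U$ as in Lemma~\ref{L:001000+}. The first step is to locate an element of $\widetilde{G}$ close to the identity that sends $x$ to $f(x)$. Using the splitting $\widetilde{\mathrm{g}}=\widetilde{L}\oplus Ker(\Phi_{x})$ together with Proposition~\ref{p:1}(ii) applied to $\widetilde{G}$ — which gives $\dim\widetilde{G}(x)=\dim\widetilde{\mathrm{g}}-\dim Ker(\Phi_{x})=\dim\widetilde{L}$ — the map $\ell\longmapsto\exp(\ell)(x)=\Phi_{x}(\exp(\ell))$ is, after shrinking $U$, a diffeomorphism of $U\cap\widetilde{L}$ onto a neighbourhood $S$ of $x$ inside the orbit $\widetilde{G}(x)$, and one arranges $\widetilde{G}(x)\cap W=S$ (this is the delicate point, discussed below). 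Since $f(x)\in\widetilde{G}(x)\cap W=S$, there is $\ell\in U\cap\widetilde{L}$ with $\exp(\ell)(x)=f(x)$.

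Now set $g=\exp(-\ell)\circ f$. Then $g(x)=\exp(-\ell)\bigl(\exp(\ell)(x)\bigr)=x$, so $g$ belongs to the stabilizer of $x$ in $\Gamma$, and $g$ is close to the identity since it is the product of two elements taken from a fixed small neighbourhood. By Proposition~\ref{p:1}(i) this stabilizer coincides near the identity with the $\exp$-image of a neighbourhood of $0$ in $\mathfrak{h}_{x}=Ker(\Phi_{x})$, and $Ker(\Phi_{x})\subseteq\widetilde{\mathrm{g}}$; hence $g=\exp(h)$ with $h\in Ker(\Phi_{x})\subseteq\widetilde{\mathrm{g}}$ small. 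As $\widetilde{G}$ is abelian, $\widetilde{\mathrm{g}}$ is abelian and $f=\exp(\ell)\circ\exp(h)=\exp(\ell+h)$ with $\ell+h\in U\cap\widetilde{\mathrm{g}}$ (after one last shrinking of $U$ ensuring $\ell+h\in U$). Therefore $f\in\exp(U\cap\widetilde{\mathrm{g}})=\exp(U)\cap\widetilde{G}=\widetilde{G}\cap V$, again by Lemma~\ref{L:001000+}. This yields $\Phi_{x}^{-1}(\widetilde{G}(x)\cap W)\cap V\subseteq\widetilde{G}\cap V$ and hence the equality.

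The step I expect to be the main obstacle is the normalization $\widetilde{G}(x)\cap W=S$. Since the orbit $\widetilde{G}(x)$ is in general only an immersed submanifold of $\mathbb{C}^{n}$ and need not be locally closed — which is exactly the situation the paper studies — a generic ambient neighbourhood of $x$ meets $\widetilde{G}(x)$ in infinitely many sheets, so the particular choice $W=\Phi_{x}(V)$ with $V=\exp(U)$ must be used, and $U$ must be shrunk so that the adapted neighbourhood of Lemma~\ref{L:001000+}, the local diffeomorphism $\ell\mapsto\exp(\ell)(x)$ on $\widetilde{L}$, the $\exp$-description of the $\Gamma$-stabilizer near the identity, and the condition $\ell+h\in U$ all hold simultaneously; once this is done, the remainder is routine manipulation with the exponential map of the abelian group $\widetilde{G}$.
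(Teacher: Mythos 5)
Your easy inclusion and your overall strategy --- factor $f$ as an element of $\widetilde{G}$ composed with an element of the stabilizer of $x$, then argue that the stabilizer factor lies in $\widetilde{G}$ --- match the paper. The genuine gap is the step you yourself flag: the normalization $\widetilde{G}(x)\cap W=S$, where $S=\{\exp(\ell)(x):\ \ell\in U\cap\widetilde{L}\}$. No shrinking of $U$ can achieve this when the orbit $\widetilde{G}(x)$ is merely immersed: $W=\Phi_{x}(V)$ is the image of a whole identity neighbourhood of $\Gamma$ under the orbit map, so in the case that matters for Theorem~\ref{T:1} (namely $H_{x}=\mathbb{C}^{n}$) it is an open neighbourhood of $x$ in $\mathbb{C}^{n}$ and therefore meets every returning sheet of a recurrent $\widetilde{G}$-orbit, not only the local slice $S$. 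Worse, applying $\Phi_{x}$ to the identity asserted by the lemma shows that $\widetilde{G}(x)\cap W=\Phi_{x}(\widetilde{G}\cap V)=S$ is essentially a restatement of the conclusion you are trying to prove, so assuming you can ``arrange'' it begs the question. The paper's own proof dodges exactly this point: it takes an arbitrary $g\in\widetilde{G}$ with $g(x)=f(x)$, with no smallness requirement, and replaces your local stabilizer argument by the claim that the whole group generated by $\{h:\ h(x)=x\}\cap\mathcal{A}(\widetilde{G})$ has Lie algebra contained in $Ker(\Phi_{x})\subset\widetilde{\mathrm{g}}$ and hence is contained in $\widetilde{G}$.

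A second problem sits in your stabilizer step even if the slice property were granted. The element $g=\exp(-\ell)\circ f$ is only known to lie in $\Gamma$, so Proposition~\ref{p:1}(i), applied to the group that actually contains $g$, gives $g=\exp(h)$ with $h$ in the Lie algebra $\{B\in H:\ B(x)=0\}$ of the $\Gamma$-stabilizer of $x$; this space is in general strictly larger than $Ker(\Phi_{x})$ (the kernel of $\Phi_{x}$ on $\mathcal{A}(\widetilde{G})$) and need not be contained in $\widetilde{\mathrm{g}}$ at all. Without $h\in\widetilde{\mathrm{g}}$ you can conclude neither $[\ell,h]=0$ (so the identity $\exp(\ell)\circ\exp(h)=\exp(\ell+h)$ is unjustified) nor $\ell+h\in U\cap\widetilde{\mathrm{g}}$, and the final appeal to Lemma~\ref{L:001000+} collapses. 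The paper restricts to stabilizing elements of $\mathcal{A}(\widetilde{G})$ precisely to keep the relevant Lie algebra inside $Ker(\Phi_{x})$; your argument would need the analogous input, namely a reason why $g$ lies in $\mathcal{A}(\widetilde{G})$ or in $\widetilde{G}$ itself, which is again the content of the lemma.
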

\medskip

\begin{proof}  Since $W=\Phi_{x}(V)$, it is obvious that $\widetilde{G}\cap
V\subset\Phi_{x}^{-1}(\widetilde{G}(x)\cap W)\cap V$. Let $f\in
\Phi_{x}^{-1}(\widetilde{G}(x)\cap W)$. Then there exists $g\in
\widetilde{G}\cap V$ such that $f(x)=g(x)$. So $g^{-1}\circ
f(x)=x$. Hence $g^{-1}\circ f \in H_{x}$, where $H_{x}$ be the lie
group generated by $\{h\in Diff^{r}(\mathbb{C}^{n}):\ h(x)=x\}\cap
\mathcal{A}(\widetilde{G})$. So $H_{x}$ is contained in the
stabilizer of $Diff^{r}(\mathbb{C}^{n})$ on $x$. Set $L_{x}$ be
the lie algebra of $H_{x}$, so $L_{x}\subset\{h\in
Diff^{r}(\mathbb{C}^{n}):\ h(x)=0\}\cap
\mathcal{A}(\widetilde{G})$. Therefore $L_{x}\subset
Ker(\Phi_{x})\subset \widetilde{\mathrm{g}}$. Hence $H_{x}\subset
\widetilde{G}$. It follows that $g^{-1}\circ f \in \widetilde{G}$,
so $f\in \widetilde{G}\cap V$. This completes he proof.
\end{proof}\
\\
\\
{\it Proof of Theorem~\ref{T:1}.}\
\\
Since $\widetilde{G}$ is a locally closed sub-manifold of
$Diff^{r}(\mathbb{C}^{n})$. By Proposition ~\ref{p:1}.(ii),
$\widetilde{G}(x)$ is an immersed submanifold of $\mathbb{C}^{n}$
with dimension
$r=\mathrm{dim}(\widetilde{\mathrm{g}})-\mathrm{dim}(Ker(\Phi_{x}))$.
  We have $\mathrm{Im}(\Phi_{x})=\widetilde{\mathrm{g}}_{x}$. Then
$\mathrm{dim}(\widetilde{\mathrm{g}}_{x})=\mathrm{dim}(\widetilde{\mathrm{g}})-\mathrm{dim}(Ker(\Phi_{x})).$
 It follows from Proposition~\ref{p:1},(ii) that
 $$\mathrm{dim}(\widetilde{G}(x))=\mathrm{dim}(\widetilde{\mathrm{g}}_{x}) \ \ \ (2)$$.
 \\ {\it Proof of $(i) \Longrightarrow (iii):$} The proof results directly from
 (2), and the fact that $dim(\widetilde{G}(x))=n$ if and only if $\widetilde{G}(x)$ is a non empty open set. \\
  {\it Proof of $(iii)\Longrightarrow (ii):$} Since $\widetilde{G}(x)\cap W$ is a non empty open set then the proof follows
  directly from  Lemma ~\ref{L:000001010}.(ii), because
   $\widetilde{G}(x)\cap W\subset\overline{\widetilde{G}(x)}\cap W=\overline{G(x)}\cap W$.\ \\
  {\it Proof of $(ii)\Longrightarrow (i):$} Since
  $\overset{\circ}{\overline{G(x)}}\subset Im(\Phi_{x})\subset \mathbb{C}^{n}$
  then the linear map
  $\Phi_{x}: \mathcal{A}(\widetilde{G})\longrightarrow\mathbb{C}^{n}$ is surjective, so it is an open map. By Lemma
\ref{L:001000+} there exists two open subsets $U$ and $V=exp(U)$
respectively of $H$ and $\Gamma$ such that the exponential map $\
exp:U
   \longrightarrow V $ is a diffeomorphism and satisfying $exp(\widetilde{\mathrm{g}}\cap U)=\widetilde{G}\cap
   V$. So

  $$\ exp^{-1}(\widetilde{G}\cap V)=\widetilde{\mathrm{g}}\cap U.\ \ \ \ \ \  (1)$$  Recall that $W=\Phi_{x}(V)$. Since $\Phi_{x}$ is an open
  map and by Lemma~\ref{L:000001010}.(i), $\overset{\circ}{\overline{G(x)}}=\overset{\circ}{\overline{\widetilde{G}(x)}}$, so
  \begin{align*}
  \Phi_{x}^{-1}(\overset{\circ}{\overline{G(x)}}\cap
  W)& =\Phi_{x}^{-1}(\overset{\circ}{\overline{\widetilde{G}(x)}}\cap W)\\
  \ & \subset \Phi_{x}^{-1}(\overline{\widetilde{G}(x)\cap W})\\
   \ & \subset \overline{\Phi_{x}^{-1}(\widetilde{G}(x)\cap W)} \
   \ \ \ \ (3)
    \end{align*}\
We have
     \begin{align*}
    \Phi_{x}\circ \ exp^{-1}(\Phi_{x}^{-1}(\overset{\circ}{\overline{G(x)}}\cap W)\cap V) &  \subset \Phi_{x}\circ \ exp^{-1}(\overline{\Phi_{x}^{-1}(\widetilde{G}(x)\cap W)\cap V})\ \ \ \  \ (\mathrm{by} \ (3))\\
   \ & \subset \Phi_{x}\circ \ exp^{-1}(\overline{\widetilde{G}\cap V}),\ \ \ \ \ \ \ \ \ \ \  \ \ \ \ \  \ \ \ \ \ ( \mathrm{by \ Lemma} ~\ref{L:2}) \\
    \ & \subset \Phi_{x}\circ \overline{\ exp^{-1}(\widetilde{G}\cap
    V)}\\
   \ & \subset  \Phi_{x}(\overline{ \widetilde{\mathrm{g}}\cap U
 })\ \ \ \ \ \ \ \ \ \ \  \ \ \ \ \ \ \ \ \ \ \ \ \ \ \ \ \ \ \ \ \ \  \ \ \ (\mathrm{by}\ (1))\\
  \ &  \subset  \widetilde{\mathrm{g}}_{x}
  \end{align*}
  Since $\overset{\circ}{\overline{G(x)}}\cap W$ is a non empty
  open subset of $\mathbb{C}^{n}$ then $\Phi_{x}\circ exp^{-1}(\Phi_{x}^{-1}(\overset{\circ}{\overline{G(x)}}\cap W)\cap V)$
    is an open subset of $\mathbb{C}^{n}$. It follows that $\widetilde{\mathrm{g}}_{x}=\mathbb{C}^{n}$. The proof is completed
    $\hfill{\Box}$.

\section{{\bf Proof of Theorem ~\ref{T:3} and
Corollaries \ref{C:5}, \ref{C:6}}}
\medskip

Under the notation of Lemma~\ref{L:001000+}, recall that there
exists an open subset $U$ of $\mathcal{A}(\widetilde{G})$ such
that $\ exp: U\longrightarrow \ exp(U)$ is a diffeomorphism. Now,
by using the restricton of the withney topology to
$\mathcal{A}(\widetilde{G})$, denote by: \ \\ - $B_{(0, r)}=\{f\in
\mathcal{A}(\widetilde{G}):\ \ \|f\|< r\}$, the open ball with
center $0$ and radius $r>0$ . \ \\ - $r_{G}=sup\{r\in ]0,1[:\ \
B_{(0,r)}\subset U\}$, it is dependent of $G$ since is $U$.
\medskip

\begin{thm}\label{T:2} Let  $G$  be a  subgroup of $Diff^{r}(\mathbb{C}^{n})$ and $x\in \mathbb{C}^{n}$.
If there exist $f_{1},\dots, f_{2n}\in exp^{-1}(\widetilde{G})$
with $\|f_{k}\|<r_{\widetilde{G}}$, for every $k=1,\dots, 2n$ such
that $(f_{1}(x), \dots,f_{2n}(x))$ is a basis of $\mathbb{C}^{n}$
over $\mathbb{R}$, then
$\overset{\circ}{\overline{G(x)}}\neq\emptyset$.
\end{thm}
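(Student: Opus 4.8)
The plan is to show that under the hypothesis, the orbit $\widetilde{G}(x)$ contains a nonempty open subset of $\mathbb{C}^{n}$, and then invoke Lemma~\ref{L:000001010} (which gives $\overline{G(x)}=\overline{\widetilde{G}(x)}$) to conclude $\overset{\circ}{\overline{G(x)}}\neq\emptyset$. Concretely, consider the map $\Psi:\mathbb{R}^{2n}\longrightarrow\mathbb{C}^{n}$ defined on a neighborhood of $0$ by
$$\Psi(t_{1},\dots,t_{2n})=\bigl(exp(t_{1}f_{1})\circ\cdots\circ exp(t_{2n}f_{2n})\bigr)(x).$$
Because $\widetilde{G}$ is abelian its Lie algebra $\widetilde{\mathrm{g}}$ is abelian, so $exp(t_{1}f_{1})\circ\cdots\circ exp(t_{2n}f_{2n})=exp(t_{1}f_{1}+\cdots+t_{2n}f_{2n})$; thus $\Psi(t)=exp\bigl(\sum_{k}t_{k}f_{k}\bigr)(x)$, and for $\|\sum_{k}t_{k}f_{k}\|$ small enough (which holds for $t$ near $0$, since each $\|f_{k}\|<r_{\widetilde{G}}$) this element lies in $\widetilde{G}$ by Lemma~\ref{L:001000+}. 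Hence the image of $\Psi$ on a small neighborhood of $0$ is contained in $\widetilde{G}(x)$.

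Next I would compute the differential of $\Psi$ at $t=0$. Using the chain rule and the definition of the exponential map as a flow (Section 2.1), the partial derivative $\frac{\partial}{\partial t_{k}}\Psi(t)\big|_{t=0}$ equals $f_{k}(x)$, the value at $x$ of the vector field $f_{k}$. Therefore the Jacobian of $\Psi$ at $0$, viewed as an $\mathbb{R}$-linear map $\mathbb{R}^{2n}\to\mathbb{C}^{n}\cong\mathbb{R}^{2n}$, has columns $f_{1}(x),\dots,f_{2n}(x)$. By hypothesis $(f_{1}(x),\dots,f_{2n}(x))$ is a basis of $\mathbb{C}^{n}$ over $\mathbb{R}$, so this Jacobian is invertible. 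By the inverse function theorem, $\Psi$ is a local diffeomorphism near $0$; in particular $\Psi$ maps a small neighborhood of $0$ onto an open neighborhood of $\Psi(0)=x$ in $\mathbb{C}^{n}$. Since this open set is contained in $\widetilde{G}(x)$, we get $\overset{\circ}{\widetilde{G}(x)}\neq\emptyset$, hence $\overset{\circ}{\overline{\widetilde{G}(x)}}\neq\emptyset$, and finally $\overset{\circ}{\overline{G(x)}}\neq\emptyset$ by Lemma~\ref{L:000001010}.

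The main obstacle I anticipate is the differentiability bookkeeping: justifying rigorously that $\Psi$ is differentiable as a map into $\mathbb{C}^{n}$ and that $\partial_{t_{k}}\Psi(0)=f_{k}(x)$, given that $Diff^{r}(\mathbb{C}^{n})$ is infinite-dimensional and one is composing flows. This is handled by the smooth-dependence theory of flows (the one-parameter group $exp(t\xi)$ has $\frac{\partial}{\partial t}exp(t\xi)(a)\big|_{t=0}=\xi(a)$ as recalled in Section 2.1) together with the fact that the evaluation/composition operations are smooth in the relevant finite-dimensional parameters $t_{1},\dots,t_{2n}$; alternatively one works directly in the chart provided by Lemma~\ref{L:001000+}, where $exp$ is a genuine diffeomorphism onto a neighborhood in $\widetilde{G}$, reducing everything to the finite-dimensional statement already used in the proof of Theorem~\ref{T:1}. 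A secondary (minor) point is to keep track of the radius bound: one must choose the neighborhood of $0$ in $\mathbb{R}^{2n}$ small enough that $\|\sum_{k}t_{k}f_{k}\|<r_{\widetilde{G}}$, which is possible since $\max_{k}\|f_{k}\|<r_{\widetilde{G}}$ and the norm is subadditive.
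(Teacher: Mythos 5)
There is a genuine gap, and it sits exactly where the hypothesis $\|f_{k}\|<r_{\widetilde{G}}$ is supposed to do its work. Your map $\Psi$ takes values in the orbit $\widetilde{G}(x)$ only if $\exp(t_{1}f_{1})\circ\cdots\circ\exp(t_{2n}f_{2n})\in\widetilde{G}$, and your commutation step $\exp(t_{1}f_{1})\circ\cdots\circ\exp(t_{2n}f_{2n})=\exp(\sum_{k}t_{k}f_{k})$ likewise presupposes that the $f_{k}$ lie in the abelian Lie algebra $\widetilde{\mathrm{g}}$. But the hypothesis only gives $f_{k}\in\exp^{-1}(\widetilde{G})$, i.e.\ $e^{f_{k}}\in\widetilde{G}$ for the single time $t=1$; a priori $f_{k}$ need not belong to $\widetilde{\mathrm{g}}$, need not commute with the other $f_{j}$, and $e^{t f_{k}}$ need not be in $\widetilde{G}$ for $t\neq 1$. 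Your justification ``for $\|\sum_{k}t_{k}f_{k}\|$ small enough this element lies in $\widetilde{G}$ by Lemma~\ref{L:001000+}'' misreads that lemma: it states $\exp(U)\cap\widetilde{G}=\exp(U\cap\widetilde{\mathrm{g}})$, not that every element of $U$ (i.e.\ every small vector field) exponentiates into $\widetilde{G}$. So smallness of $\sum_{k}t_{k}f_{k}$ alone gives nothing.

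The missing step --- which is the actual content of the paper's proof --- is to use the bound $\|f_{k}\|<r_{\widetilde{G}}$ to place $f_{k}$ itself in $U$, observe that $e^{f_{k}}\in\widetilde{G}\cap\exp(U)=\exp(U\cap\widetilde{\mathrm{g}})$ by Lemma~\ref{L:001000+}, and then use injectivity of $\exp$ on $U$ to conclude $f_{k}\in\widetilde{\mathrm{g}}$ for every $k$. Once that is in place, your route becomes legitimate (the $f_{k}$ commute, $\exp(\sum_{k}t_{k}f_{k})\in\widetilde{G}$ for all $t$, and the inverse function theorem argument with Jacobian columns $f_{k}(x)$ shows $\widetilde{G}(x)$ has nonempty interior, after which Lemma~\ref{L:000001010} finishes). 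Note, however, that this second half essentially re-proves the implication $(i)\Rightarrow(ii)$ of Theorem~\ref{T:1}; the paper instead stops as soon as $f_{k}\in\widetilde{\mathrm{g}}$ is known, since the basis hypothesis then gives $\widetilde{\mathrm{g}}_{x}=\mathbb{C}^{n}$ and Theorem~\ref{T:1} can be cited directly.
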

\medskip

\begin{proof} We have   $f_{k}\in exp^{-1}(\widetilde{G})$ with
$\|f_{k}\|< r_{\widetilde{G}}$ for every $k=1,\dots, 2n$, then
$f_{1},\dots, f_{2n}\in U$ and so $e^{f_{k}}\in \widetilde{G}\cap
V$. By Lemma ~\ref{L:001000+},
 $\widetilde{G}\cap V=exp(U\cap
  \widetilde{\mathrm{g}})$, hence $f_{k}\in\widetilde{\mathrm{g}}$, for every $k=1,\dots, 2n$. As $(f_{1}(x),\dots, f_{2n}(x))$ is a basis of
  $\mathbb{C}^{n}$ over $\mathbb{R}$ then
$\widetilde{\mathrm{g}_{x}}=\mathbb{C}^{n}$.
 It follows by Theorem ~\ref{T:1} that $\overset{\circ}{\overline{G(x)}}\neq\emptyset$.\
\end{proof}

\medskip

\begin{lem}\label{L:BBB017} Let $H$ be a vector space with dimension $2n$ over $\mathbb{R}$ and $x_{1},\dots, x_{2n+1}\in H$, such that
 $\mathbb{Z}x_{1}+\dots+\mathbb{Z}x_{2n+1}$ is dense in $H$. Then for every $1\leq k\leq 2n+1$,
  $(x_{1},\dots, x_{k-1},x_{k+1},\dots, x_{2n+1})$ is a basis of $H$ over $\mathbb{R}$.
\end{lem}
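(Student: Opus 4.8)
The plan is to argue by contradiction, pushing the dense subgroup into the quotient of $H$ by the span of the $2n$ retained vectors. Since the hypothesis ``$\mathbb{Z}x_{1}+\dots+\mathbb{Z}x_{2n+1}$ is dense in $H$'' is symmetric under permutation of the indices, it suffices to treat the case $k=2n+1$, i.e.\ to show that $(x_{1},\dots,x_{2n})$ is a basis of $H$; the general case follows by relabeling. Because $\dim H=2n$ and we are looking at exactly $2n$ vectors, ``basis'' is equivalent to ``spanning'', so I would suppose toward a contradiction that $W:=\mathrm{vect}(x_{1},\dots,x_{2n})$ is a proper subspace of $H$, hence $\dim(H/W)\ge 1$.

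Next I would transport the lattice $\Lambda:=\mathbb{Z}x_{1}+\dots+\mathbb{Z}x_{2n+1}$ to $H/W$ via the canonical projection $\pi\colon H\longrightarrow H/W$. Since $\pi(x_{j})=0$ for $j=1,\dots,2n$, we get $\pi(\Lambda)=\mathbb{Z}\pi(x_{2n+1})$, a cyclic subgroup of $H/W$. As $\pi$ is continuous and surjective and $\Lambda$ is dense in $H$ by assumption, $\pi(\Lambda)$ would have to be dense in $H/W$.

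The contradiction then comes from the elementary fact that a cyclic subgroup of a real vector space of dimension $\ge 1$ is never dense: if $\pi(x_{2n+1})=0$ the subgroup is $\{0\}$; otherwise $\mathbb{Z}\pi(x_{2n+1})$ is discrete, hence a closed proper subset of $H/W$ (it lies on the line $\mathbb{R}\pi(x_{2n+1})$, which is itself proper when $\dim(H/W)\ge 2$, and is a genuine lattice when $\dim(H/W)=1$). Either way $\pi(\Lambda)$ is not dense in $H/W$, contradicting the previous paragraph. Hence $W=H$, so $(x_{1},\dots,x_{2n})$ is a basis of $H$, and applying the same argument after permuting the indices yields the statement for every $k$ with $1\le k\le 2n+1$.

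I expect the only mild subtlety is the verification that a cyclic (rank $\le 1$) subgroup of $\mathbb{R}^{m}$ with $m\ge 1$ fails to be dense; everything else is routine bookkeeping with the quotient map and the fact that continuous surjections send dense sets to dense sets. No structure theory of closed subgroups of $\mathbb{R}^{m}$ is needed.
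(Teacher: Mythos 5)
Your proof is correct and takes essentially the same approach as the paper: assume the $2n$ retained vectors span a proper subspace $K$ and contradict density of $\mathbb{Z}x_{1}+\dots+\mathbb{Z}x_{2n+1}$, which lies in $K+\mathbb{Z}x_{k}$. The only difference is that the paper merely asserts that $K+\mathbb{Z}x_{k}$ cannot be dense, while you justify that assertion by projecting to $H/K$ and observing that a cyclic subgroup of a nonzero real vector space is never dense — a useful extra detail, but the same argument.
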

\medskip

\begin{proof} We have $H$ is isomorphic to $\mathbb{C}^{n}$. Let $1\leq k \leq 2n+1$ be a fixed integer and
take  $$K=vect(x_{1},\dots, x_{k-1},x_{k+1},\dots, x_{2n+1}).$$
Suppose that $\mathrm{dim}(K)=p<2n$.  Let $(x_{k_{1}},\dots,
x_{k_{p}})$ be a basis of $K$.
  Then $\mathbb{Z}x_{1}+\dots+\mathbb{Z}x_{2n+1}\subset K+\mathbb{Z}x_{k}$ which cannot be dense in $H$, a contradiction.
\end{proof}
\medskip

Recall that $\widetilde{L}$ is the vector subspace of
$\widetilde{\mathrm{g}}$ supplement to $Ker(\Phi_{x})$, (i.e.
 $\widetilde{L}\oplus
Ker(\Phi_{x})=\widetilde{\mathrm{g}}$).\ Denote by: \ \\ - $p_{x}:
\widetilde{L}\oplus Ker(\Phi_{x})\longrightarrow \widetilde{L}$
given by $p_{x}(f+h)=f$, $f\in \widetilde{L}$ and $h\in
Ker(\Phi_{x})$.\ \\
\medskip

\begin{lem}\label{L:9}  Under above notations, we have:
\begin{itemize}
  \item [(i)] The linear map $\Phi_{x}:\widetilde{L}\longrightarrow E(x)$
 defined by $\Phi_{x}(f)=f(x)$, is an isomorphism.
    \item [(ii)] for every $f\in \widetilde{\mathrm{g}}$ one has  $\Phi_{x}^{-1}(f(x))=p_{x}(f)$.
\end{itemize}
\end{lem}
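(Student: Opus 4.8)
\textbf{Plan of proof for Lemma~\ref{L:9}.}
The plan is to unwind the definitions and exploit the direct sum decomposition $\widetilde{\mathrm{g}} = \widetilde{L}\oplus Ker(\Phi_{x})$ together with the fact (from the discussion following Proposition~\ref{p:1}) that $\widetilde{L}$ was chosen precisely as a complement to $Ker(\Phi_{x})$ inside $\widetilde{\mathrm{g}}$, and that $\mathrm{Im}(\Phi_{x}) = \widetilde{\mathrm{g}}_{x}$, which in turn equals $E(x)=\Phi_{x}(\mathcal{A}(\widetilde{G}))$ once we observe that every element of $\mathcal{A}(\widetilde{G})$ evaluated at $x$ already lies in the image of $\widetilde{\mathrm{g}}$ under $\Phi_{x}$.

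For part (i), first I would check that $\Phi_{x}$ restricted to $\widetilde{L}$ is injective: if $f\in\widetilde{L}$ and $\Phi_{x}(f)=f(x)=0$, then $f\in Ker(\Phi_{x})\cap\widetilde{L}=\{0\}$ by the directness of the sum. Next I would check surjectivity onto $E(x)$: given $v\in E(x)$, write $v=\Phi_{x}(g)$ for some $g\in\mathcal{A}(\widetilde{G})$; since $\mathrm{Im}(\Phi_{x})=\widetilde{\mathrm{g}}_{x}$, there is $h\in\widetilde{\mathrm{g}}$ with $h(x)=v$, and decomposing $h=f+h_{0}$ with $f\in\widetilde{L}$, $h_{0}\in Ker(\Phi_{x})$ gives $\Phi_{x}(f)=f(x)=h(x)-h_{0}(x)=v$, so $v$ is hit by an element of $\widetilde{L}$. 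Hence $\Phi_{x}\colon\widetilde{L}\to E(x)$ is a linear bijection, i.e.\ an isomorphism (of real vector spaces; it is automatically bicontinuous since $\widetilde{L}$ is finite-dimensional, $\dim\widetilde{L}=cod(Ker(\Phi_{x}))\le n$).

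For part (ii), let $f\in\widetilde{\mathrm{g}}$ and write $f = p_{x}(f) + (f - p_{x}(f))$ with $p_{x}(f)\in\widetilde{L}$ and $f-p_{x}(f)\in Ker(\Phi_{x})$ by definition of the projection $p_{x}$. Applying $\Phi_{x}$ gives $f(x) = \Phi_{x}(p_{x}(f)) + 0 = \Phi_{x}(p_{x}(f))$, so $p_{x}(f)$ is the (unique, by part (i)) preimage in $\widetilde{L}$ of $f(x)$ under the isomorphism of part (i); that is exactly the assertion $\Phi_{x}^{-1}(f(x)) = p_{x}(f)$, where $\Phi_{x}^{-1}$ is understood as the inverse of the isomorphism $\widetilde{L}\to E(x)$. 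The only point needing a word of care — and the mildest obstacle here — is making sure that $E(x)$ really coincides with $\mathrm{Im}(\Phi_{x}|_{\widetilde{\mathrm{g}}})=\widetilde{\mathrm{g}}_{x}$, so that the target $E(x)$ in the statement is the right one; this follows because $G\subset\widetilde{G}$ generates $\mathcal{A}(\widetilde{G})$ and, via the local diffeomorphism $exp$ of Lemma~\ref{L:001000+}, the evaluation of the algebra at $x$ is governed by $\widetilde{\mathrm{g}}_{x}$. Beyond that the proof is a routine verification of injectivity, surjectivity, and compatibility of the projection with $\Phi_{x}$.
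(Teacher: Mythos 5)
Your argument is essentially the paper's own proof: injectivity of $\Phi_{x}$ on $\widetilde{L}$ from $\widetilde{L}\cap Ker(\Phi_{x})=\{0\}$, surjectivity onto $E(x)$ from the identification of $E(x)$ with $\widetilde{\mathrm{g}}_{x}$ (which the paper dismisses as ``by construction''), and for (ii) the same decomposition $f=p_{x}(f)+(f-p_{x}(f))$ followed by evaluation at $x$. If anything, you are slightly more careful than the paper, since you explicitly flag the point $E(x)=\widetilde{\mathrm{g}}_{x}$ that the paper leaves implicit.
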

\medskip

\begin{proof} (i) By construction $\Phi_{x}$ is surjective and restreint to $\widetilde{L}$ it became injective. By Lemma~\ref{L:aaaaa101001} $\Phi_{x}$ is continuous and bijective. Hence it is an isomorphism
because it is linear.\ \\    (ii) Let $f\in
\widetilde{\mathrm{g}}$. Write $f=f_{1}+f_{0}$ with
$f_{1}=p_{x}(f)\in  \widetilde{L}$
  and $f_{0}\in Ker(\Phi_{x}))$. Since $f_{0}(x)=0$, so $f(x)=f_{1}(x)$. By (i), $\Phi_{x}$ is an isomorphism, then
  $\Phi_{x}^{-1}(f(x))=\Phi_{x}^{-1}(f_{1}(x))=f_{1}=p_{x}(f)$.
  This completes the proof.
\end{proof}
\bigskip

 Let $f_{1},\dots, f_{2n+1}\in \widetilde{\mathrm{g}}$ and suppose that $(p_{x}(f_{1}),\dots, p_{x}(f_{2n}))$ is a basis of \ $\widetilde{L}$ over $\mathbb{R}$ and $f_{n+1}\in vect(f_{1},\dots, f_{2n})$.
  Denote by $\Psi: \widetilde{L}\longrightarrow
\widetilde{\mathrm{g}}$ the linear map given by
$$\Psi\left(\underset{k=1}{\overset{2n}{\sum}}\alpha_{k}p_{x}(f_{k})\right)=\underset{k=1}{\overset{2n}{\sum}}\alpha_{k}f_{k}.$$\
\medskip

\begin{lem}\label{L:018} Under above notations, we have:\
\\
(i) If $\overline{\mathbb{Z}f_{1}(x)+\dots
+\mathbb{Z}f_{2n+1}(x)}=\mathbb{C}^{n}$ then
 $\overline{\mathbb{Z}p_{x}(f_{1})+\dots +\mathbb{Z}p_{x}(f_{2n+1})}=\widetilde{L}$.
\
\\
(ii) $\Psi\left(\mathbb{Z}p_{x}(f_{1})+\dots+
\mathbb{Z}p_{x}(f_{2n+1})\right)=\mathbb{Z}f_{1}+\dots+
\mathbb{Z}f_{2n+1}$.
\end{lem}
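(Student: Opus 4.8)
The two assertions are of different flavors: (ii) is a purely algebraic identity about the image of a linear map, while (i) is a topological statement comparing closures under $\Phi_x$ (equivalently under $p_x$, via Lemma~\ref{L:9}). I would handle (ii) first, since it is the quicker of the two and its computation clarifies the bookkeeping needed for (i).

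For (ii): By construction $\Psi$ is $\mathbb{R}$-linear and sends the spanning tuple $(p_x(f_1),\dots,p_x(f_{2n}))$ of $\widetilde{L}$ to $(f_1,\dots,f_{2n})$. The only subtlety is the index $2n+1$, which is not among the generators used to define $\Psi$; here I would invoke the standing hypothesis $f_{2n+1}\in vect(f_1,\dots,f_{2n})$, writing $f_{2n+1}=\sum_{k=1}^{2n}\alpha_k f_k$. Applying $p_x$ (linear) gives $p_x(f_{2n+1})=\sum_{k=1}^{2n}\alpha_k p_x(f_k)$, and applying $\Psi$ to this combination recovers $\sum_{k=1}^{2n}\alpha_k f_k=f_{2n+1}$, so $\Psi(p_x(f_{2n+1}))=f_{2n+1}$ as well. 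Hence $\Psi$ maps the generating set $\{p_x(f_1),\dots,p_x(f_{2n+1})\}$ onto $\{f_1,\dots,f_{2n+1}\}$, and since $\Psi$ is $\mathbb{Z}$-linear (being $\mathbb{R}$-linear), the image of the subgroup $\mathbb{Z}p_x(f_1)+\dots+\mathbb{Z}p_x(f_{2n+1})$ is exactly $\mathbb{Z}f_1+\dots+\mathbb{Z}f_{2n+1}$, which is (ii).

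For (i): The key point is that $\Phi_x$ restricted to $\widetilde{L}$ is an \emph{isomorphism} $\widetilde{L}\to E(x)$ by Lemma~\ref{L:9}(i), hence in particular a homeomorphism (both spaces being finite-dimensional real vector spaces, and $\Phi_x$ continuous with continuous inverse). By Lemma~\ref{L:9}(ii), for each $f_k\in\widetilde{\mathrm{g}}$ we have $p_x(f_k)=\Phi_x^{-1}(f_k(x))$, so the subgroup $\mathbb{Z}p_x(f_1)+\dots+\mathbb{Z}p_x(f_{2n+1})$ is precisely the image under the homeomorphism $\Phi_x^{-1}$ of $\mathbb{Z}f_1(x)+\dots+\mathbb{Z}f_{2n+1}(x)$. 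A homeomorphism carries closures to closures, so the hypothesis $\overline{\mathbb{Z}f_1(x)+\dots+\mathbb{Z}f_{2n+1}(x)}=\mathbb{C}^n$ would give $\overline{\mathbb{Z}p_x(f_1)+\dots+\mathbb{Z}p_x(f_{2n+1})}=\Phi_x^{-1}(\mathbb{C}^n)$. To conclude this equals $\widetilde{L}$, I need $E(x)=\mathbb{C}^n$: but if the left side is dense in $\mathbb{C}^n$ then so is its subset $\mathbb{Z}f_1(x)+\dots+\mathbb{Z}f_{2n+1}(x)\subset E(x)$, forcing $\overline{E(x)}=\mathbb{C}^n$, and since $E(x)$ is a linear subspace (image of the linear map $\Phi_x$) it is closed, so $E(x)=\mathbb{C}^n$. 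Then $\Phi_x^{-1}(\mathbb{C}^n)=\widetilde{L}$, giving (i).

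\textbf{Main obstacle.} The computational content is routine; the only place to be careful is the interplay in (i) between "dense" and "all of $\mathbb{C}^n$" — one must not assume at the outset that $E(x)=\mathbb{C}^n$, but rather derive it from the density hypothesis, using that $E(x)$ is a vector subspace (hence closed) and contains the dense subgroup. A second minor point worth a sentence is justifying that $\Phi_x|_{\widetilde{L}}$, being a continuous linear bijection between finite-dimensional real vector spaces, is automatically a homeomorphism (open mapping / invariance of dimension), so that it commutes with taking closures; this is where Lemma~\ref{L:9}(i) is essential.
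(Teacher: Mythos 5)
Your proof is correct and follows essentially the same route as the paper: part (ii) by the same computation exploiting $f_{2n+1}=\sum_{k=1}^{2n}\alpha_k f_k$ and the linearity of $p_x$ and $\Psi$, and part (i) by pulling the dense subgroup back through the isomorphism $\Phi_x\vert_{\widetilde{L}}$ of Lemma~\ref{L:9} and using that it commutes with closures. Your explicit derivation of $E(x)=\mathbb{C}^n$ from the density hypothesis (the paper merely asserts it) is a minor but welcome refinement, not a different approach.
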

\medskip

\begin{proof} (i) Here $E(x)=\mathbb{C}^{n}$. By Lemma ~\ref{L:9},(i), $\Phi_{x}:\widetilde{L}\longrightarrow \mathbb{C}^{n}$
is an isomorphism and by Lemma ~\ref{L:9},(ii), we have
$\Phi_{x}^{-1}(f_{k}(x))=p_{x}(f_{k})(x)$
 for every $k=1,\dots, 2n+1$, so $$\Phi_{x}^{-1}(\mathbb{Z}f_{1}(x)+\dots +\mathbb{Z}f_{2n+1}(x))= \mathbb{Z}p_{x}(f_{1})+\dots +\mathbb{Z}p_{x}(f_{2n+1}).$$ Then \begin{align*}
\widetilde{L} & =\Phi_{x}^{-1}(\mathbb{C}^{n})\\ \ &
=\Phi_{x}^{-1}(\overline{\mathbb{Z}f_{1}(x)+\dots
+\mathbb{Z}f_{2n+1}(x)})\\ \ & =
\overline{\mathbb{Z}\Phi_{x}^{-1}(f_{1}(x))+\dots
+\mathbb{Z}\Phi_{x}^{-1}(f_{2n+1}(x)}))\\ \ &
=\overline{\mathbb{Z}p_{x}(f_{1})+\dots
+\mathbb{Z}p_{x}(f_{2n+1})}
\end{align*}
\\
 (ii)  Let  $k_{1},\dots, k_{2n+1}\in \mathbb{Z}$ and $f=k_{1}p_{x}(f_{1})+\dots + k_{2n+1}p_{x}(f_{2n+1})$. Write $f_{2n+1}=\underset{k=1}{\overset{2n}{\sum}}\alpha_{k}f_{k}$,
 $\alpha_{1},\dots, \alpha_{2n}\in \mathbb{R}$, then $$f=(k_{1}+\alpha_{1}k_{2n+1})p_{x}(f_{1})+\dots + (k_{2n}+\alpha_{2n}k_{2n+1})p_{x}(f_{2n}),$$ so
  \begin{align*}
  \Psi(f) & =\Psi\left((k_{1}+\alpha_{1}k_{2n+1})p_{x}(f_{1})+\dots + (k_{2n}+\alpha_{2n}k_{2n+1})p_{x}(f_{2n})\right)\\
  \ & =(k_{1}+\alpha_{1}k_{2n+1})f_{1}+\dots + (k_{2n}+\alpha_{2n}k_{2n+1})f_{2n}\\
  \ & = k_{1}f_{1}+\dots + k_{2n+1}f_{2n+1}
  \end{align*}
  Then $\Psi\left(\mathbb{Z}p_{x}(f_{1})+\dots+ \mathbb{Z}p_{x}(f_{2n+1})\right)\subset\mathbb{Z}f_{1}+\dots+ \mathbb{Z}f_{2n+1}$.
  The same proof is used for the converse, by replacing $\Psi$ by $\Psi^{-1}$.
\end{proof}
\bigskip

\begin{prop}\label{p:012}$($\cite{mW}, Proposition 4.3$)$. Let $H = \mathbb{Z}x_{1}+\dots+\mathbb{Z}x_{p}$ with $x_{k } \in\mathbb{R}^{n}$.
 Then $H$ is dense in $\mathbb{R}^{n}$ if and only if for every
$(s_{1},\dots,s_{p})\in
 \mathbb{Z}^{p}\backslash\{0\}$ :
$$\mathrm{rank}\left[\begin{array}{cccc }
 x_{1} &\dots &\dots & x_{p} \\
   s_{1 } &\dots&\dots & s_{p }
 \end{array}\right] =\ n+1.$$
\end{prop}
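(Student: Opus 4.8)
The plan is to reduce the statement to Pontryagin duality for $\mathbb{R}^{n}$ and then convert the resulting condition into the rank condition by linear algebra. I would use two standard facts: every continuous character of $\mathbb{R}^{n}$ (continuous homomorphism $\mathbb{R}^{n}\to\mathbb{R}/\mathbb{Z}$) is of the form $\chi_{t}(y)=\langle t,y\rangle\bmod 1$ for a unique $t\in\mathbb{R}^{n}$, where $\langle\cdot,\cdot\rangle$ is the usual inner product; and a proper closed subgroup of $\mathbb{R}^{n}$ is always contained in the kernel of some \emph{nontrivial} continuous character. Applying the latter to the closed subgroup $\overline{H}$, and observing that $\chi_{t}$ vanishes on $H$ if and only if it vanishes on $\overline{H}$, if and only if $\langle t,x_{k}\rangle\in\mathbb{Z}$ for all $k=1,\dots,p$, I would obtain the reformulation
\[
H\ \text{dense in}\ \mathbb{R}^{n}\ \Longleftrightarrow\ \bigl(t\in\mathbb{R}^{n}\ \text{and}\ \langle t,x_{k}\rangle\in\mathbb{Z}\ \text{for all}\ k\ \Longrightarrow\ t=0\bigr).
\]

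Next I would prove that this ``no nonzero $t$'' condition is equivalent to the rank condition, arguing on contrapositives. Suppose first there is $t\neq 0$ with $s_{k}:=\langle t,x_{k}\rangle\in\mathbb{Z}$ for all $k$. If $(s_{1},\dots,s_{p})\neq 0$, then this integer row is the combination $\sum_{i=1}^{n}t_{i}r_{i}$ of the coordinate rows $r_{1},\dots,r_{n}$ of $[x_{1}\ \cdots\ x_{p}]$, so appending it leaves the rank at most $n$; if $(s_{1},\dots,s_{p})=0$ then $x_{1},\dots,x_{p}$ lie in the hyperplane orthogonal to $t$, so $[x_{1}\ \cdots\ x_{p}]$ has rank $\le n-1$ and adjoining any nonzero integer row still gives rank $\le n$. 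Conversely, suppose the matrix has rank $\le n$ for some $(s_{1},\dots,s_{p})\in\mathbb{Z}^{p}\setminus\{0\}$; then its $n+1$ rows satisfy a nontrivial relation $\sum_{i=1}^{n}\lambda_{i}r_{i}+\mu(s_{1},\dots,s_{p})=0$. If $\mu=0$ then $\lambda\neq 0$, so the $x_{k}$ span a proper subspace and $H$ is not dense; if $\mu\neq 0$ then $t:=-\mu^{-1}(\lambda_{1},\dots,\lambda_{n})$ satisfies $\langle t,x_{k}\rangle=s_{k}\in\mathbb{Z}$ for all $k$, and $t\neq 0$ since $\lambda=0$ would force $\mu(s_{1},\dots,s_{p})=0$, contradicting $(s_{1},\dots,s_{p})\neq 0$. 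In every case a ``bad $t$'' exists, and combining the two steps finishes the proof.

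The step I expect to be the main obstacle is the duality input: that a proper closed subgroup of $\mathbb{R}^{n}$ is annihilated by a nontrivial character. If one wishes to keep the argument elementary, I would replace it by the structure theorem for closed subgroups of $\mathbb{R}^{n}$, which gives $\overline{H}=V\oplus\Lambda$ with $V$ a linear subspace and $\Lambda$ a lattice in a complementary subspace $W$; then $\overline{H}=\mathbb{R}^{n}$ precisely when $W=0$, and when $W\neq 0$ one produces the required nontrivial character of $\mathbb{R}^{n}$ by composing the quotient map $\mathbb{R}^{n}\to\mathbb{R}^{n}/(V\oplus\Lambda)\cong W/\Lambda$ with a nontrivial character of the nontrivial locally compact group $W/\Lambda\cong(\mathbb{R}/\mathbb{Z})^{\mathrm{rank}\,\Lambda}\times\mathbb{R}^{\dim W-\mathrm{rank}\,\Lambda}$. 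Beyond that, the only care needed is to treat separately the degenerate situations---$x_{1},\dots,x_{p}$ failing to span $\mathbb{R}^{n}$, or $(s_{1},\dots,s_{p})=0$---so that the universal quantifier ``for every nonzero $(s_{1},\dots,s_{p})$'' lines up correctly with the equivalence.
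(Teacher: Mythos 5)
Your proposal is correct. Note that the paper itself gives no proof of this proposition: it is quoted verbatim from Waldschmidt (\cite{mW}, Proposition 4.3) and used as a black box, so there is nothing internal to compare against. Your argument is the standard one behind the cited result: reduce density of $H$ to the vanishing of its annihilator, i.e.\ $H$ is dense iff the only $t\in\mathbb{R}^{n}$ with $\langle t,x_{k}\rangle\in\mathbb{Z}$ for all $k$ is $t=0$ (via the description of characters of $\mathbb{R}^{n}$ and the structure theorem $\overline{H}=V\oplus\Lambda$ for closed subgroups, as you indicate), and then translate that condition into the rank statement by the two contrapositive computations you give. Both directions of the linear-algebra translation are handled correctly, including the degenerate cases $(s_{1},\dots,s_{p})=0$ (where you pass to an arbitrary nonzero integer row) and $\mu=0$ (where the $x_{k}$ span a proper subspace, so non-density is immediate); the verification that $t=-\mu^{-1}(\lambda_{1},\dots,\lambda_{n})$ is nonzero is also right. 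So your write-up would serve as a self-contained proof of the proposition that the paper merely cites.
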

\medskip

\begin{proof}[Proof of Theorem ~\ref{T:3}] Write $H_{x}=\mathbb{Z}f_{1}(x)+\dots+ \mathbb{Z}f_{2n+1}(x)$.
Since $\overline{H_{x}}=\mathbb{C}^{n}$, by Lemma~\ref{L:BBB017},
$(f_{1}(x),\dots, f_{2n}(x))$ is a basis of $\mathbb{C}^{n}$, so
$f_{1},\dots, f_{2n}$ are linearly independent over $\mathbb{R}$.
Denote by $E=vect(f_{1},\dots, f_{2n})$, then
$E=\Psi(\widetilde{L})$ and it has a dimension equal to $2n$ over
$\mathbb{R}$, so $\Psi:\widetilde{L}\longrightarrow E$ is an
isomorphism. Since $f_{2n+1}\in vect(f_{1},\dots, f_{2n})$ then by
Lemma~\ref{L:018},(i), $\overline{\mathbb{Z}p_{x}(f_{1})+\dots
+\mathbb{Z}p_{x}(f_{2n+1})}=\widetilde{L}$. Therefore:
\begin{align*}
 E& =\Psi(\widetilde{L})\\
 \ & =\Psi(\overline{\mathbb{Z}p_{x}(f_{1})+\dots+ \mathbb{Z}p_{x}(f_{2n+1})})\\
 \ & =\overline{\mathbb{Z}f(p_{x}(f_{1}))+\dots+ \mathbb{Z}f(p_{x}(f_{2n+1}))}\\
 \ & = \overline{\mathbb{Z}f_{1}+\dots+ \mathbb{Z}f_{2n+1}}\ \ \ \ (1)\\
 \end{align*}

 Let $1\leq k \leq 2n$ and $t_{k}\in \mathbb{R}^{*}$ such that $|t_{k}|<\frac{r_{\widetilde{G}}}{\|f_{k}\|}$.\
\\
$\bullet$ First, let's prove that $e^{t_{k}f_{k}}\in G$\
\\
Since $t_{k}f_{k}\in E$, then by (1), $t_{k}f_{k}\in
\overline{\mathbb{Z}f_{1}+\dots+ \mathbb{Z}f_{2n+1}}$. Thus there
exists a sequence $(g_{j})_{j\in\mathbb{N}}\subset
\mathbb{Z}f_{1}+\dots+ \mathbb{Z}f_{2n+1}$ such that
$\underset{j\to+\infty}{lim}g_{j}=t_{k}f_{k}$.
 By continuity of the exponential map we  have $\underset{j\to +\infty}{lim}e^{g_{j}}=e^{t_{k}f_{k}}$. Since
 $\mathbb{Z}f_{1}+\dots +\mathbb{Z}f_{2n+1}\subset exp^{-1}(\widetilde{G})$ then $g_{j}\in exp^{-1}(\widetilde{G})$, so
  $e^{t_{k}f_{k}}\in \widetilde{G}$, since $\widetilde{G}$ is closed in $Diff^{r}(\mathbb{C}^{n})$.

\
\\
$\bullet$ Second,  as $|t_{k}|<\frac{r_{G}}{\|f_{k}\|}$, then
$\|t_{k}f_{k}\|<r_{\widetilde{G}}$.\ Since $|t_{k}|\neq 0$ and
$(f_{1}(x),\dots, f_{2n}(x))$ is a basis of $\mathbb{C}^{n}$, so
is $(t_{1}f_{1}(x),\dots, t_{2n}f_{2n}(x))$. By the first step we
conclude that $e^{t_{k}B_{k}}\in G$ for every $k=1,\dots, 2n$. The
proof follows then from Theorem
  ~\ref{T:2}.
 \end{proof}
\medskip

The complex form of Proposition~\ref{p:012} is given in the
following:

\begin{prop}\label{p:012+}$($\cite{mW}, $page$ \ $35)$. Let $H = \mathbb{Z}z_{1}+\dots+\mathbb{Z}z_{p}$ with $z_{k } \in\mathbb{C}^{n}$
 and $z_{k} =
\mathrm{Re}(z_{k})+i \mathrm{Im}(z_{k})$, $k = 1,\dots, p$. Then
$H$ is dense in $\mathbb{C}^{n}$ if and only if for every
$(s_{1},\dots,s_{p})\in
 \mathbb{Z}^{p}\backslash\{0\}$ :
$$\mathrm{rank}\left[\begin{array}{cccc }
 \mathrm{Re}(z_{1}) &\dots &\dots & \mathrm{Re}(z_{p}) \\
  \mathrm{Im}(z_{1 }) &\dots &\dots & \mathrm{Im}(z_{p}) \\
  s_{1 } &\dots&\dots & s_{p }
 \end{array}\right] =\ 2n+1.$$
\end{prop}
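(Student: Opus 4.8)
The plan is to derive the complex statement from the real version, Proposition~\ref{p:012}, by the standard identification of $\mathbb{C}^{n}$ with $\mathbb{R}^{2n}$. First I would fix the $\mathbb{R}$-linear isomorphism $\iota:\mathbb{C}^{n}\longrightarrow \mathbb{R}^{2n}$ sending $z = \mathrm{Re}(z) + i\,\mathrm{Im}(z)$ to the column vector $\left(\begin{smallmatrix}\mathrm{Re}(z)\\ \mathrm{Im}(z)\end{smallmatrix}\right)\in\mathbb{R}^{2n}$. Since $\iota$ is an $\mathbb{R}$-linear homeomorphism, it carries the additive group $H = \mathbb{Z}z_{1}+\dots+\mathbb{Z}z_{p}$ onto $\iota(H) = \mathbb{Z}\iota(z_{1})+\dots+\mathbb{Z}\iota(z_{p})\subset\mathbb{R}^{2n}$, and $H$ is dense in $\mathbb{C}^{n}$ if and only if $\iota(H)$ is dense in $\mathbb{R}^{2n}$.

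Next I would apply Proposition~\ref{p:012} to $\iota(H)\subset\mathbb{R}^{2n}$ with the vectors $x_{k} = \iota(z_{k})$ and with ambient dimension $2n$ in place of $n$: $\iota(H)$ is dense in $\mathbb{R}^{2n}$ if and only if for every $(s_{1},\dots,s_{p})\in\mathbb{Z}^{p}\backslash\{0\}$ the $(2n+1)\times p$ matrix with columns $\left(\begin{smallmatrix}\iota(z_{k})\\ s_{k}\end{smallmatrix}\right)$ has rank $2n+1$. It only remains to observe that this matrix is exactly
$$\left[\begin{array}{cccc}
 \mathrm{Re}(z_{1}) &\dots &\dots & \mathrm{Re}(z_{p}) \\
  \mathrm{Im}(z_{1}) &\dots &\dots & \mathrm{Im}(z_{p}) \\
  s_{1} &\dots&\dots & s_{p}
 \end{array}\right],$$
since stacking $\mathrm{Re}(z_{k})\in\mathbb{R}^{n}$ above $\mathrm{Im}(z_{k})\in\mathbb{R}^{n}$ above $s_{k}$ is precisely the column $\left(\begin{smallmatrix}\iota(z_{k})\\ s_{k}\end{smallmatrix}\right)$. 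Chaining the two equivalences gives the claim.

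There is no real obstacle here; the statement is essentially a notational repackaging of Proposition~\ref{p:012}. The only point requiring a word of care is making explicit that ``dense in $\mathbb{C}^{n}$'' and ``dense in $\mathbb{R}^{2n}$'' correspond under $\iota$ (a homeomorphism preserves closures, hence density), and that the dimension parameter in Weil's criterion must be taken to be $2n$, not $n$, which is why the required rank is $2n+1$ rather than $n+1$. One could alternatively cite \cite{mW} directly, as the statement is attributed to page 35 there; the above is simply the reduction that justifies that citation.
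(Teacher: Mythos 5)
Your reduction is correct. The paper itself offers no proof of Proposition~\ref{p:012+}: it is stated purely as a citation to \cite{mW} (page 35), just as Proposition~\ref{p:012} is cited as Proposition 4.3 there. What you have written is exactly the justification one would want behind that citation: the $\mathbb{R}$-linear isomorphism $\iota:\mathbb{C}^{n}\to\mathbb{R}^{2n}$, $z\mapsto(\mathrm{Re}(z),\mathrm{Im}(z))$, is a homeomorphism and a group isomorphism onto its image, so it carries $H=\mathbb{Z}z_{1}+\dots+\mathbb{Z}z_{p}$ onto $\mathbb{Z}\iota(z_{1})+\dots+\mathbb{Z}\iota(z_{p})$ and preserves density; applying Proposition~\ref{p:012} in ambient dimension $2n$ turns the rank condition into the stated $(2n+1)\times p$ rank condition, since the column $\bigl(\iota(z_{k}),s_{k}\bigr)$ is precisely the $k$-th column of the displayed matrix. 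You correctly flag the two points that could trip someone up: that density must be transported through $\iota$, and that the dimension parameter in the real criterion is $2n$, which is why the rank threshold is $2n+1$ rather than $n+1$. So your argument does not diverge from the paper so much as it fills a gap the paper leaves to the reference; it is complete and can stand as a self-contained proof modulo Proposition~\ref{p:012}.
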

\medskip

\begin{proof}[Proof of Corollary~\ref{C:5}]  The proof results directly, from Theorem~\ref{T:3} and Proposition ~\ref{p:012+}.
\end{proof}

\medskip

\begin{lem}\label{LLL:12}$($\cite{aAhM05}, Corollary 1.3$)$. Let $G$ be an abelian subgroup of
$GL(n,\mathbb{C})$.  If   $G$  has a locally dense orbit $\gamma$
in $\mathbb{C}^{n}$ then  $\gamma$ is dense in $\mathbb{C}^{n}$.
\end{lem}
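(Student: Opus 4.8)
The plan is to combine Theorem~\ref{T:1} with the classification of abelian subgroups of $GL(n,\mathbb{C})$ from \cite{aAhM05}. Note first that $x\neq 0$, since $G(0)=\{0\}$ is not locally dense. I would replace $G$ by its Euclidean closure $\Gamma_{0}=\overline{G}$ in $GL(n,\mathbb{C})$: since each matrix acts continuously on $\mathbb{C}^{n}$ one has $G(x)\subseteq\Gamma_{0}(x)\subseteq\overline{G(x)}$, hence $\overline{\Gamma_{0}(x)}=\overline{G(x)}$, so $\gamma=G(x)$ is locally dense iff $\overset{\circ}{\overline{\Gamma_{0}(x)}}\neq\emptyset$. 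Being a closed subgroup of $GL(n,\mathbb{C})$, $\Gamma_{0}$ is an abelian Lie subgroup of $Diff^{\infty}(\mathbb{C}^{n})$ with $0\in Fix(\Gamma_{0})$, so Theorem~\ref{T:1} applies to it.

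By Theorem~\ref{T:1}, $\overset{\circ}{\overline{\Gamma_{0}(x)}}\neq\emptyset$ forces $H_{x}=\mathbb{C}^{n}$, where $H=\mathrm{Lie}(\Gamma_{0})\subset\mathfrak{gl}(n,\mathbb{C})$ and $H_{x}=\{Bx:\ B\in H\}$. The linear map $\Phi_{x}\colon H\to H_{x}$, $B\longmapsto Bx$, is onto with kernel $\mathfrak{h}_{x}$, so by Proposition~\ref{p:1}(ii) the orbit $\Gamma_{0}(x)$ is an immersed submanifold of $\mathbb{C}^{n}$ of real dimension $\dim(H)-\dim(\mathfrak{h}_{x})=\dim(H_{x})=2n$. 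An injectively immersed $2n$-dimensional submanifold of $\mathbb{R}^{2n}$ is an open subset (inverse function theorem), so $\Gamma_{0}(x)$, and a fortiori $\overline{\gamma}$, has nonempty interior.

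It remains to upgrade ``$\Gamma_{0}(x)$ open'' to ``$\overline{\Gamma_{0}(x)}=\mathbb{C}^{n}$''. Here I would invoke the normal form of \cite{aAhM05}: after a linear change of coordinates $\Gamma_{0}$ lies in the standard group of block-diagonal matrices with upper-triangular Toeplitz blocks, and there is a $\Gamma_{0}$-invariant finite union $S$ of proper linear subspaces (the vanishing loci of the leading coordinates of the blocks) such that $U=\mathbb{C}^{n}\setminus S$ is open and dense and every $\Gamma_{0}$-orbit meeting $U$ has closure either contained in $S$ or equal to $\mathbb{C}^{n}$. Since $\Gamma_{0}(x)$ is open it meets the dense set $U$ and is not contained in $S$, whence $\overline{\Gamma_{0}(x)}=\mathbb{C}^{n}$, i.e. $\gamma$ is dense in $\mathbb{C}^{n}$.

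The main obstacle is this last step: there is no purely soft topological reason that an open orbit of a linear abelian group must be dense, and excluding the coexistence of several open orbits genuinely needs the structural description of \cite{aAhM05}. A more self-contained variant would pass to the Zariski closure $\Gamma_{0}^{z}$ of $\Gamma_{0}$ and use that for algebraic group actions the Euclidean closure of an orbit equals its Zariski closure, so that $\overline{\Gamma_{0}^{z}(x)}=\mathbb{C}^{n}$; but one would still have to rule out that $\Gamma_{0}(x)$ is a proper open suborbit of $\Gamma_{0}^{z}(x)$ coexisting with other open orbits, which is again exactly what the \cite{aAhM05} normal form provides. So I expect the \cite{aAhM05} classification to be the unavoidable ingredient.
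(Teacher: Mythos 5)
The paper does not actually prove this lemma: it is imported verbatim as Corollary 1.3 of \cite{aAhM05}, so there is no internal proof to measure yours against. Judged on its own terms, your proposal is a partial reconstruction rather than a proof. The first two steps are sound: passing to the Euclidean closure $\Gamma_{0}=\overline{G}$ in $GL(n,\mathbb{C})$ (a closed, hence Lie, abelian subgroup by Cartan's theorem) preserves orbit closures, and Theorem~\ref{T:1} together with Proposition~\ref{p:1}(ii) does upgrade ``$\overline{\gamma}$ has nonempty interior'' to ``$\Gamma_{0}(x)$ is open'', since an injective immersion in equal dimension is an open map. (One caveat worth flagging: Theorem~\ref{T:1} is formulated for Lie subgroups of $Diff^{r}(\mathbb{C}^{n})$ with the Whitney topology, and the closure of a matrix group in that topology is not literally its closure in $GL(n,\mathbb{C})$ --- in the fine $C^{0}$ topology a sequence of pairwise distinct linear maps never converges --- so the identification you make silently needs a word of justification.)

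The genuine gap is the last step. ``Open orbit $\Rightarrow$ dense orbit'' is precisely the nontrivial content of the statement, and you do not prove it: you attribute it to the normal form of \cite{aAhM05}, and your description of what that normal form delivers is not coherent as written --- an orbit meeting $U=\mathbb{C}^{n}\setminus S$ cannot have closure contained in $S$, so your stated dichotomy reduces to the desired conclusion by fiat rather than by argument. In effect you have reduced the lemma to the main structure theorem of the very reference from which the lemma is quoted; that is legitimate as a citation, but it is not an independent proof, and it is no more self-contained than what the paper does by simply invoking \cite{aAhM05}, Corollary 1.3. To close the gap you would have to actually carry out the normal-form argument showing that the points with dense orbit form exactly the complement of the invariant union $S$ of proper subspaces, or else accept the external citation as the proof.
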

\medskip

\begin{proof}[Proof of Corollary~\ref{C:6}] Since the matrices $B_{j}$, $1\leq j \leq 2n+1
$ commute then $\mathbb{Z}B_{1}+\dots+\mathbb{Z}B_{2n+1}\subset
exp^{-1}(G)$. Hence the proof of Corollary~\ref{C:6} results
directly from Corollary~\ref{C:5} and Lemma ~\ref{LLL:12}.
\end{proof}
\medskip
\
\\
{\bf Question1:} {\it How can we characterize explicitly
$\mathrm{g}=exp^{-1}(G)$ for any finitely generated abelian
subgroup $G$ of a lie group $\Gamma\subset
Diff^{r}(\mathbb{C}^{n})$?}
\\
\\
{\bf Question2:} {\it A somewhere dense orbit of a non abelian
subgroup of $Diff^{r}(\mathbb{C}^{n})$ can  always be dense in
$\mathbb{C}^{n}$?}

\medskip

\bibliographystyle{amsplain}
\vskip 0,4 cm

\end{document}